\def\M{{\mathcal{M}}}
\def\N{{\mathbb{N}}}
\def\R{{\mathbb{R}}}
\def\U{{\mathcal{U}}}
\theoremstyle{plain}
\newtheorem{thm}{Theorem}[section]
\newtheorem{prop}{Proposition}[section]
\theoremstyle{definition}
\newtheorem{rem}{Remark}[section]
\title{Optimality Conditions for Nonconvex Variational Problems with Integral Constraints in Banach Spaces\thanks{An earlier version of this paper was presented at the 30th European Conference on Operational Research, Dublin, in June 2019, the International Congress on Industrial and Applied Mathematics, Valencia, in July 2019, and the International Conference on Nonlinear Analysis and Convex Analysis, Hakodate, in August 2019. The author is grateful to H\'el\`ene Frankowska, Sabine Pickenhain, Christiane Tammer, and an anonymous referee for their helpful comments.}}
\date{\today}
\author{Nobusumi Sagara\thanks{This research is supported by JSPS KAKENHI Grant Number JP18K01518 from the Ministry of Education, Culture, Sports, Science and Technology, Japan.}  \\[-3pt]
\\[-3pt]
{\small Faculty of Economics, Hosei University} \\[-4pt]
{\small 4342, Aihara, Machida, Tokyo, 194-0298, Japan} \\[-4pt]
{\footnotesize e-mail: nsagara@hosei.ac.jp}}
\begin{document}
\maketitle
\setcounter{page}{0}
\thispagestyle{empty}
\clearpage

\begin{abstract} 
This paper exemplifies that saturation is an indispensable structure on measure spaces to obtain the existence and characterization of solutions to nonconvex variational problems with integral constraints in Banach spaces and their dual spaces. We provide a characterization of optimality via the maximum principle for the Hamiltonian and an existence result without the purification of relaxed controls, in which the Lyapunov convexity theorem in infinite dimensions under the saturation hypothesis on the underlying measure space plays a crucial role. We also demonstrate that the existence of solutions for certain class of primitives is necessary and sufficient for the measure space to be saturated. \\

\noindent
\textbf{Keywords:} Lyapunov convexity theorem, saturated measure space, Bochner integral, Gelfand integral, value function, maximum principle, subdifferential, normal cone. \\ 

\noindent
\textbf{2010 Mathematics Subject Classification:} Primary: 28B20, 49J27, 49K27; Secondary: 28B05, 46G10, 93C25. 
\end{abstract}

\section{Introduction}
Optimal resource allocation problems in mathematical economics are formulated as an isoperimetric problem of the following form:
\begin{equation}  
\label{P}
\begin{aligned}
& \inf_{f\in L^1(\mu,\R^n)}\int_T\varphi(t,f(t))d\mu \\
& \text{s.t. $\int_Tf(t)d\mu=x\in \R^n_+$ and $f(t)\in \R^n_+$ a.e.\ $t\in T$}
\end{aligned}
\tag{$\mathrm{P}$}
\end{equation}
where $(T,\Sigma,\mu)$ is a measure space and $\varphi:T\times \R^n_+\to \R$ is an integrand. The existence of solutions and the characterization of optimality in terms of the Euler--\hspace{0pt}Lagrange inequality for variational problem \eqref{P} were established in \cite{ap65} without any convexity assumptions. One of the prominent features of their result is the clarification of the role of the classical Lyapunov convexity theorem, which guarantees the convexity of the integral of a multifunction with values in finite-\hspace{0pt}dimensional vector spaces, a significant property to derive the necessary condition for optimality employing the separation theorem and to apply the relaxation technique for the existence. Subsequent works along this line are found in \cite{al72,ar74,ar80,ar19,bl73,cr98,fbjm14,hi74,io06,io09,it74,ma79,ma86,ol90}. 

Under the nonatomicity of measure spaces, the incorporation of integral constraints in Banach spaces into the above nonconvex variational problem is quite difficult for the existence and characterization of solutions because of the celebrated failure of the Lyapunov convexity theorem in infinite dimensions; see \cite{du77}. To overcome this difficulty, we strengthen the notion of nonatomicity and propose using saturated measure spaces originated in \cite{ma42} and then elaborated by \cite{fk02,hk84,ks09}. As shown in \cite{gp13,ks13,ks15,ks16,sa17}, the saturation of measure spaces is not only sufficient but also necessary for the Lyapunov convexity theorem to be true in separable Banach spaces and their dual spaces. An inevitable consequence of this fact is that it is futile to attempt to obtain a general existence result in infinite dimensions as long as one sticks to nonatomic measure spaces. 

The purpose of this paper is to exemplify that saturation is an indispensable structure on measure spaces to obtain the existence and characterization of solutions to nonconvex variational problems with integral constraints in separable Banach spaces and their dual spaces. The state constraints are formulated in the Bochner and Gelfand integral settings with control systems in separable metrizable spaces. The problem under consideration is a general reduced form of the isometric problem studied in \cite{ks14,sa15,sa17}, which is an infinite-\hspace{0pt}dimensional analogue of \cite{ap65} followed by the forementioned works. We provide a characterization of optimality via the subgradient like inequality for the integrand and the state constraint function, which describes the maximum principle for the Hamiltonian. Unlike \cite{ks14,sa17}, we prove an existence result without the purification of relaxed controls, in which the compactness and convexity of the integral of a multifunction with values in infinite dimensions under the saturation hypothesis on the underlying measure space along the lines of \cite{po08,sy08} play a crucial role. We also demonstrate that the existence of solutions for certain class of primitives is necessary and sufficient for the measure space to be saturated, which provides another characterization of saturation.

\section{Preliminaries}
\subsection{Bochner Integrals of Multifunctions}
Let $(T,\Sigma,\mu)$ be a complete finite measure space and $(E,\|\cdot\|)$ be a Banach space with its dual $E^*$ furnished with the dual system $\langle \cdot,\cdot \rangle$ on $E^*\times E$. A function $f:T\to E$ is \textit{strongly measurable} if there exists a sequence of simple (or finitely valued measurable) functions $f_n:T\to E$ such that $\|f(t)-f_n(t)\|\to 0$ a.e.\ $t\in T$; $f$ is \textit{Bochner integrable} if it is strongly measurable and $\int \|f(t)\|d\mu<+\infty$, where the \textit{Bochner integral} of $f$ over $A\in \Sigma$ is defined by $\int_A fd\mu=\lim_n\int_A f_nd\mu$. Denote by $L^1(\mu,E)$ the space of ($\mu$-equivalence classes of) $E$-\hspace{0pt}valued Bochner integrable functions on $T$, normed by $\| f \|_1=\int \| f(t) \|d\mu$, $f\in L^1(\mu,E)$. By the Pettis measurability theorem (see \cite[Theorem II.1.2]{du77}), $f$ is strongly measurable if and only if it is measurable with respect to the norm topology of $E$ whenever $E$ is separable.  

A set-\hspace{0pt}valued mapping $\Gamma$ from $T$ to the family of nonempty subsets of $E$ is called a \textit{multifunction}. Denote by $\overline{\mathrm{co}}\,\Gamma$ the multifunction defined by the closure of the convex hull of $\Gamma(t)$. The multifunction $\Gamma:T\twoheadrightarrow E$ is \textit{measurable} if the set $\{t\in T\mid \Gamma(t)\cap U\ne \emptyset \}$ is in $\Sigma$ for every open subset $U$ of $E$; it is \textit{graph measurable} if the set $\mathrm{gph}\,\Gamma:=\{ (t,x)\in T\times E\mid x\in \Gamma(t) \}$ belongs to $\Sigma\otimes \mathrm{Borel}(E,\| \cdot \|)$, where $\mathrm{Borel}(E,\| \cdot \|)$ is the Borel $\sigma$-\hspace{0pt}algebra of $(E,\| \cdot \|)$ generated by the norm topology. If $E$ is separable, then $\mathrm{Borel}(E,\| \cdot \|)$ coincides with the Borel $\sigma$-\hspace{0pt}algebra $\mathrm{Borel}(E,\mathit{w})$ of $E$ generated by the weak topology; see \cite[Part I, Chap.\,II, Corollary 2]{sc73}. It is well-\hspace{0pt}known that for closed-\hspace{0pt}valued multifunctions, measurability and graph measurability coincide whenever $E$ is separable; see \cite[Theorem III.30]{cv77}. A function $f:T\to E$ is a \textit{selector} of $\Gamma$ if $f(t)\in \Gamma(t)$ a.e.\ $t\in T$. If $E$ is separable, then by the Aumann measurable selection theorem, a multifunction $\Gamma$ with measurable graph admits a measurable selector (see \cite[Theorem III.22]{cv77}) and it is also strongly measurable. 

Let $B$ be the open unit ball in $E$. A multifunction $\Gamma:T\twoheadrightarrow E$ is \textit{integrably bounded} if there exists $\varphi\in L^1(\mu)$ such that $\Gamma(t)\subset \varphi(t)B$ a.e.\ $t\in T$. If $\Gamma$ is graph measurable and integrably bounded, then it admits a Bochner integrable selector whenever $E$ is separable. Denote by $\mathcal{S}^1_\Gamma$ the set of Bochner integrable selectors of $\Gamma$. The Bochner integral of $\Gamma$ is conventionally defined as $\int\Gamma d\mu:=\{ \int fd\mu \mid f\in \mathcal{S}^1_\Gamma \}$.

\subsection{Gelfand Integrals of Multifunctions}
A function $f:T\to E^*$ is \textit{weakly$^*\!$ scalarly measurable} if for every $x\in E$ the scalar function $\langle f(\cdot),x \rangle:T\to \R$ defined by $t\mapsto \langle f(t),x \rangle$ is measurable. Denote by $L^\infty(\mu,E^*_{\textit{w}^*})$ the space of ($\mu$-equivalence classes of) weakly$^*\!$ measurable, essentially bounded, $E^*$-valued functions on $T$, normed by $\| f \|_\infty=\mathrm{ess\,sup}_{t\in T}\| f(t) \|<\infty$. Then the dual space of $L^1(\mu,E)$ is given by $L^\infty(\mu,E^*_{\textit{w}^*})$ whenever $E$ is separable (see \cite[Theorem 2.112]{fl07}) and the dual system is given by $\langle f,g \rangle=\int\langle f(t),g(t) \rangle d\mu$ with $f\in L^\infty(\mu,E^*_{\mathit{w}^*})$ and $g\in L^1(\mu,E)$. Denote by $\mathrm{Borel}(E^*,\mathit{w}^*)$ the Borel $\sigma$-\hspace{0pt}algebra of $E^*$ generated by the weak$^*\!$ topology. If $E$ is a separable Banach space, then $E^*$ is separable with respect to the weak$^*\!$ topology. Hence, under the separability of $E$, a function $f:T\to E^*$ is weakly$^*\!$ scalarly measurable if and only if it is Borel measurable with respect to $\mathrm{Borel}(E^*,\mathit{w}^*)$; see \cite[Theorem 1]{th75}. Weakly$^*\!$ scalarly measurable functions $f_1,f_2:T\to E^*$ are \textit{weakly$^*\!$ scalarly equivalent} if $\langle f_1(t),x \rangle=\langle f_2(t),x \rangle$ for every $x\in E$ a.e.\ $t\in T$ (the exceptional $\mu$-\hspace{0pt}null set depending on $x$). 

A weakly$^*\!$ scalarly measurable function $f$ is \textit{weakly$^*\!$ scalarly integrable} if the scalar function $\langle f(\cdot),x \rangle$ is integrable for every $x\in E$. A weakly$^*$ scalarly measurable function $f$ is \textit{Gelfand integrable} over $A\in \Sigma$ if there exists $x^*_A\in E^*$ such that $\langle x^*_A,x \rangle=\int_A\langle f(t),x \rangle d\mu$ for every $x\in E$. The element $x^*_A$ is called the \textit{Gelfand integral} (or \textit{weak$^*\!$ integral}) of $f$ over $A$, denoted by $\mathit{w}^*\text{-}\int_Afd\mu$. Every weakly$^*\!$ scalarly integrable function is Gelfand integrable; see \cite[Theorem 11.52]{ab06}. Denote by $G^1(\mu,E^*)$ the space of equivalence classes of $E^*$-valued Gelfand integrable functions on $T$ with respect to weak$^*\!$ scalar equivalence, normed by $\| f \|_{\mathit{G}^1}=\sup_{x\in B}\int |\langle f(t),x \rangle| d\mu$. This norm is called the \textit{Gelfand norm} and the normed space $(G^1(\mu,E^*), \|\cdot \|_{\mathit{G}^1})$, in general, is not complete.

Let $\Gamma:T\twoheadrightarrow E^*$ be a multifunction. Denote by $\overline{\mathrm{co}}^{\mathit{\,w}^*}\Gamma:T\twoheadrightarrow E^*$ the multifunction defined by the weakly$^*\!$ closed convex hull of $\Gamma(t)$. A multifunction $\Gamma$ is \textit{measurable} if the set $\{t\in T\mid \Gamma(t)\cap U\ne \emptyset \}$ is in $\Sigma$ for every weakly$^*\!$ open subset $U$ of $E^*$. If $E$ is separable, then $E^*$ is a Suslin space, and hence, a multifunction $\Gamma$ with measurable graph in $\Sigma\otimes \mathrm{Borel}(E^*,\mathit{w}^*)$ admits a $\mathrm{Borel}(E^*,\mathit{w}^*)$-\hspace{0pt}measurable (or equivalently, weakly$^*\!$ measurable) selector; see \cite[Theorem III.22]{cv77}. 

Let $B^*$ be the open unit ball of $E^*$. A multifunction $\Gamma:T\twoheadrightarrow E^*$ is \textit{integrably bounded} if there exists $\varphi\in L^1(\mu)$ such that $\Gamma(t)\subset \varphi(t)B^*$ for every  $t\in T$. If $\Gamma$ is integrably bounded with measurable graph, then it admits a Gelfand integrable selector whenever $E$ is separable. Denote by $\mathcal{S}^{1,\mathit{w}^*}_\Gamma$ the set of Gelfand integrable selections of $\Gamma$. The Gelfand integral of $\Gamma$ is conventionally defined as $\mathit{w}^*\text{-}\int\Gamma d\mu:=\{ \mathit{w}^*\text{-}\int fd\mu \mid f\in \mathcal{S}^{1,\mathit{w}^*}_\Gamma \}$.

\subsection{Lyapunov Convexity Theorem in Banach Spaces}
In what follows, we always assume the completeness of $(T,\Sigma,\mu)$. Let $\Sigma_S=\{ A\cap S\mid A\in \Sigma \}$ be the $\sigma$-\hspace{0pt}algebra restricted to $S\in \Sigma$. Denote by $L^1_S(\mu)$ the space of $\mu$-\hspace{0pt}integrable functions on the measurable space $(S,\Sigma_S)$ whose elements are restrictions of functions in $L^1(\mu)$ to $S$. An equivalence relation $\sim$ on $\Sigma$ is given by $A\sim B \Leftrightarrow \mu(A\triangle B)=0$, where $A\triangle B$ is the symmetric difference of $A$ and $B$ in $\Sigma$. The collection of equivalence classes is denoted by $\Sigma(\mu)=\Sigma/\sim$ and its generic element $\widehat{A}$ is the equivalence class of $A\in \Sigma$. We define the metric $\rho$ on $\Sigma(\mu)$ by $\rho(\widehat{A},\widehat{B})=\mu(A\triangle B)$. Then $(\Sigma(\mu),\rho)$ is a complete metric space; see \cite[Lemma 13.13]{ab06}. When the equivalence relation $\sim$ is restricted to $\Sigma_S$, we obtain a metric space $(\Sigma_S(\mu),\rho)$ as a subspace of $(\Sigma(\mu),\rho)$. Note that $(\Sigma(\mu),\rho)$ is separable if and only if $L^1(\mu)$ is separable; see \cite[Lemma 13.14]{ab06}. The \textit{density} $\mathrm{dens}\,X$ of a topological space $X$ is the least cardinality of a dense subset of $X$. We denote by $\mathrm{dens}\,\Sigma(\mu)$ the density of the metric space $(\Sigma(\mu),\rho)$ induced by a finite measure space $(T,\Sigma,\mu)$. Then $L^1(\mu)$ is separable if and only if $\mathrm{dens}\,\Sigma(\mu)\le\aleph_0$. 

A finite measure space $(T,\Sigma,\mu)$ is said to be \textit{saturated} if $L^1_S(\mu)$ is nonseparable for every $S\in \Sigma$ with $\mu(S)>0$. Several equivalent definitions for saturation are known; see \cite{fk02,fr12,hk84,ks09}. One of the simple characterizations of the saturation property is as follows: $(T,\Sigma,\mu)$ is saturated if and only if $\mathrm{dens}\,\Sigma_S(\mu)\ge \aleph_1$ for every $S\in \Sigma$ with $\mu(S)>0$. Since $(T,\Sigma,\mu)$ is nonatomic if and only if $\mathrm{dens}\,\Sigma_S(\mu)\ge \aleph_0$ for every $S\in \Sigma$ with $\mu(S)>0$, saturation implies nonatomicity. An germinal notion of saturation in terms of the uncountability of $\mathrm{dens}\,\Sigma(\mu)$ already appeared in \cite{ka44,ma42}; see also \cite[331Y(e)]{fr12}. 

The following characterization of the saturation property will be useful for many applications.   

\begin{prop}
\label{lyp}
Let $(T,\Sigma,\mu)$ be a finite measure space and let $E$ be an infinite-\hspace{0pt}dimensional separable Banach space. Then the following conditions are equivalent.
\begin{enumerate}[\rm(i)]
\item $(T,\Sigma,\mu)$ is saturated. 
\item For every $\mu$-\hspace{0pt}continuous vector measure $m:\Sigma\to E$, its range $m(\Sigma)$ is weakly compact and convex in $E$.
\item For every $\mu$-\hspace{0pt}continuous vector measure $m:\Sigma\to E^*$, its range $m(\Sigma)$ is weakly$^*\!$ compact and convex in $E^*$.
\item For every multifunction $\Gamma:T\twoheadrightarrow E$, its Bochner integral $\int\Gamma d\mu$ is convex.
\item For every integrably bounded, weakly compact-\hspace{0pt}valued multifunction $\Gamma:T\twoheadrightarrow E$ with the measurable graph, $\int\Gamma d\mu=\int\overline{\mathrm{co}}\,\Gamma d\mu$.
\item For every multifunction $\Gamma:T\twoheadrightarrow E^*$, its Gelfand integral $\mathit{w}^*\text{-}\int\Gamma d\mu$ is convex.
\item For every integrably bounded, weakly$^*\!$ compact-\hspace{0pt}valued multifunction $\Gamma:T\twoheadrightarrow E^*$ with the measurable graph, $\mathit{w}^*\text{-}\int\Gamma d\mu=\mathit{w}^*\text{-}\int\overline{\mathrm{co}}^{\,\mathit{w}^*}\Gamma d\mu$.
\end{enumerate}
In particular, the implications (i) $\Rightarrow$ (ii), (iii), (iv), (v) are true for every separable Banach space. 
\end{prop}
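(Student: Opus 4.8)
The plan is to establish all positive implications out of saturation uniformly through a single Lyapunov identity, and then to close each of (ii)--(vii) back to (i) by a counterexample that exploits the infinite-dimensionality of $E$. The engine of the positive direction is the following single-function identity: if $(T,\Sigma,\mu)$ is saturated, then for every $g\in L^1(\mu,E)$ and every measurable $\lambda:T\to[0,1]$ there exists $A\in\Sigma$ with $\int_A g\,d\mu=\int_T\lambda g\,d\mu$. I would prove this by invoking Maharam's theorem to decompose $T$ into homogeneous components of Maharam type at least $\aleph_1$ --- this is exactly where saturation enters, since it forces every positive set, hence every component, to have uncountable type and excludes an atomic part --- modelling each component as a product $\{0,1\}^\kappa$ with $\kappa\ge\aleph_1$ under its product measure, and using the abundance of mutually independent sets supplied by the uncountable type to carry out the Lyapunov splitting simultaneously against the countably many functionals that determine the separable essential range of $g$. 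The finite-dimensional argument does not generalize precisely because countable Maharam type fails to provide enough independence to meet infinitely many scalar constraints at once, and I expect this step to be the main obstacle.

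Next I would wire the remaining positive implications to this identity. For (iv), given $f_0,f_1\in\mathcal{S}^1_\Gamma$ and $\lambda\in[0,1]$, apply the identity to $g=f_1-f_0$ to obtain $A\in\Sigma$ with $\int_A g\,d\mu=(1-\lambda)\int_T g\,d\mu$, and set $f=f_0$ on $T\setminus A$ and $f=f_1$ on $A$; then $f\in\mathcal{S}^1_\Gamma$ and $\int f\,d\mu=\lambda\int f_0\,d\mu+(1-\lambda)\int f_1\,d\mu$, giving convexity of $\int\Gamma\,d\mu$. The special case $\Gamma(t)=\{0,g(t)\}$ already yields convexity of the range of any integral-representable measure, which I would upgrade to general (ii): relative weak compactness of $m(\Sigma)$ is Bartle--Dunford--Schwartz, while convexity and closedness follow from the vector-measure form of the saturated Lyapunov theorem, argued through the same homogeneous Maharam decomposition. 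For (v) the inclusion $\int\Gamma\,d\mu\subseteq\int\overline{\mathrm{co}}\,\Gamma\,d\mu$ is trivial, and the reverse uses convexity from (iv) together with weak compactness (hence closedness) of $\int\Gamma\,d\mu$ for integrably bounded, weakly compact-valued, graph-measurable $\Gamma$. The Gelfand statements (iii), (vi), (vii) run in exact parallel, replacing the norm topology by the weak$^*\!$ topology, Bochner by Gelfand integration, and Bartle--Dunford--Schwartz by Banach--Alaoglu to furnish weak$^*\!$ compactness of the bounded, weak$^*\!$-closed convex ranges. Since none of these arguments uses infinite-dimensionality, they also establish the concluding ``in particular'' clause for every separable $E$.

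Finally I would close the cycle by the contrapositive of each converse. If $(T,\Sigma,\mu)$ is not saturated, then some $S\in\Sigma$ with $\mu(S)>0$ has separable $L^1_S(\mu)$, equivalently $\Sigma_S(\mu)$ of countable Maharam type. If $S$ carries an atom, a measure supported there has a two-point, non-convex range, violating (ii); otherwise the nonatomic part of $S$ is measure-isomorphic to Lebesgue measure on an interval, along which I would transport the classical infinite-dimensional Lyapunov counterexample --- a nonatomic $E$-valued vector measure with non-convex range, available in every infinite-dimensional separable $E$ --- extended to $\Sigma$ by $A\mapsto m(A\cap S)$. This $\mu$-continuous measure violates (ii), the associated two-point and segment-valued multifunctions violate (iv) and (v), and the weak$^*\!$ analogues violate (iii), (vi), (vii). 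The infinite-dimensionality of $E$ is indispensable here, since in finite dimensions the classical Lyapunov theorem holds for all nonatomic measures and no such counterexample exists. Chaining these converses with the positive directions yields the full equivalence.
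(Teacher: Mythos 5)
You should first know how the paper itself handles this statement: it offers no proof at all. Proposition \ref{lyp} is a compendium of known results, and the accompanying remark attributes (i) $\Leftrightarrow$ (ii) to Khan--Sagara, (i) $\Leftrightarrow$ (iii) to Greinecker--Podczeck, and (i) $\Leftrightarrow$ (iv) $\Leftrightarrow$ (v) $\Leftrightarrow$ (vi) $\Leftrightarrow$ (vii) to Podczeck and Sun--Yannelis. Measured against those papers' actual arguments, your architecture is the right one: sufficiency via Maharam's theorem reducing to homogeneous components of type at least $\aleph_1$ modelled on $\{0,1\}^\kappa$, and necessity by locating, when saturation fails, a positive-measure set $S$ whose measure algebra has countable type, handling an atom by a two-point range, and otherwise transporting the classical counterexample (a nonatomic vector measure with density and non-convex range, available in every infinite-dimensional Banach space, cf.\ Diestel--Uhl) through the isomorphism of the countable-type nonatomic algebra with the Lebesgue algebra of an interval. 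Your converse direction, including the derived multifunctions $\Gamma(t)=\{0,g(t)\}$ and $\overline{\mathrm{co}}\,\Gamma(t)=[0,g(t)]$ violating (iv) and (v), is essentially the necessity argument of Khan--Sagara, and your wiring of (iv) to the single-function identity via $g=f_1-f_0$ is correct.

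There are, however, two concrete gaps. First, your ``engine'' --- the identity that for saturated $(T,\Sigma,\mu)$, $g\in L^1(\mu,E)$ and measurable $\lambda:T\to[0,1]$ there is $A\in\Sigma$ with $\int_A g\,d\mu=\int_T\lambda g\,d\mu$ --- \emph{is} the saturated Lyapunov theorem in single-function form, and you leave its proof as an acknowledged obstacle. The difficulty is exactly where you place it: in a nonatomic space one can meet any finite set of scalar constraints, but meeting countably many \emph{exactly} and simultaneously is what fails at Maharam type $\aleph_0$, and ``abundance of mutually independent sets'' in $\{0,1\}^\kappa$ is a heuristic, not an argument; the cited proofs need genuine machinery (liftings and homogeneity in Podczeck, hyperfinite/saturation model theory in Keisler--Sun) precisely to close this step. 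Second, your upgrade from the identity to the full statement (ii) is circular as written: you say convexity and closedness of $m(\Sigma)$ ``follow from the vector-measure form of the saturated Lyapunov theorem,'' but that form \emph{is} (ii). The reduction to a density $g\in L^1(\mu,E)$ is unavailable in general: a $\mu$-continuous vector measure need not have bounded variation, and even $\mu$-continuous measures of bounded variation admit Bochner densities only when $E$ has the Radon--Nikodym property, which fails for separable spaces such as $c_0$ and $L^1$; Khan--Sagara must argue on the measure itself for this reason. Relatedly, in (v) and (vii) the weak (respectively weak$^*\!$) compactness of $\int\Gamma\,d\mu$, which you invoke to get the reverse inclusion, is itself one of the nontrivial saturated set-valued results: Diestel's theorem gives weak compactness of $\int\overline{\mathrm{co}}\,\Gamma\,d\mu$, but transferring it to $\int\Gamma\,d\mu$ requires the purification (v) you are trying to prove, so the single-function identity alone does not carry the set-valued claims. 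The weak$^*\!$ cases also need a weakly$^*\!$ measurable counterexample density in a possibly norm-nonseparable $E^*$, which ``runs in exact parallel'' glosses over.
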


\begin{rem}
The equivalence (i) $\Leftrightarrow$ (ii) is proven by \cite{ks13}, (i) $\Leftrightarrow$ (iii) is established by \cite{gp13}, and (i) $\Leftrightarrow$ (iv) $\Leftrightarrow$ (v) $\Leftrightarrow$ (vi) $\Leftrightarrow$ (vii) is due to \cite{po08,sy08}. For the Lyapunov convexity theorem in non-\hspace{0pt}separable locally convex Hausdorff spaces to be valid, $\mathrm{dens}\,\Sigma_S(\mu)$ needs to be greater than $\mathrm{dens}\,E$ for every $S\in \Sigma$ with $\mu(S)>0$; see for the detail \cite{gp13,ks15,ks16,ur19}. 
\end{rem}

\section{Nonconvex Variational Problems with Integral Constraints}
\subsection{The Case for Bochner Integrals}
Let $\overline{\R}=\R\cup \{ \pm\infty \}$ be the extended real line. The \textit{domain} of an extended real-valued function $V:E\to \overline{\R}$ is the set given by $\mathrm{dom}\,V=\{ x\in E\mid V(x)<+\infty \}$. A function $V$ is said to be \textit{proper} if $\mathrm{dom}\,V$ is nonempty and $V(x)>-\infty$ for every $x\in E$. The \textit{subdifferential} of $V$ at $x\in E$ with $V(x)\in \R$ is the set given by $\partial V(x)=\{ x^*\in E^*\mid \langle x^*,y-x \rangle \le V(y)-V(x) \ \forall y\in E \}$, where we set $\partial V(x)=\emptyset$ whenever $V(x)\not\in \R$. An element in $\partial V(x)$ is called a \textit{subgradient} of $V$ at $x$. For a convex subset $C$ of $E$, the \textit{normal cone} of $C$ at $x\in C$ is the set $N_C(x)=\{ x^*\in E^*\mid \langle x^*,y-x \rangle\le 0 \ \forall y\in C \}$.   

Let $X$ be a topological space endowed with its Borel $\sigma$-algebra $\mathrm{Borel}\,(X)$. A function $\Phi:T\times X\to E$ is called a \textit{Carath\'eodory function} if $t\mapsto \Phi(t,a)$ is measurable for every $a\in X$ and $a\mapsto \Phi(t,a)$ is continuous for every $t\in T$. If $X$ is a separable metrizable space, then the Carath\'eodory function $\Phi$ is $\Sigma\otimes \mathrm{Borel}\,(X)$-measurable; see \cite[Lemma 4.51]{ab06}. A real-valued Carath\'eodory function $\varphi:T\times X\to \R$ is called a \textit{Carath\'eodory integrand}. 

Denote by $\M(T,X)$ the space of measurable functions $u:T\to X$ and let $U:T\twoheadrightarrow X$ be a multifunction. The variational problem under consideration is of the form: 
\begin{equation}
\label{P_0}
\begin{aligned}
& \inf_{u\in \M(T,X)} \int_T\varphi(t,u(t))d\mu \quad  \\
& \text{s.t. $\int_T \Phi(t,u(t))d\mu\in C$ and $u(t)\in U(t)$ a.e.\ $t\in T$}
\end{aligned}
\tag{$\mathrm{P}_0$}
\end{equation}
where the hypotheses on the primitive $(\varphi,\Phi,U,C)$ are given in order below. The set of feasible controls is defined by
$$
\U:=\left\{ u\in \M(T,X) \mid \int_T \Phi(t,u(t))d\mu\in C,\,u(t)\in U(t)\ \text{a.e.\ $t\in T$} \right\}. 
$$
Then $\hat{u}\in \U$ is called a \textit{solution} to problem \eqref{P_0} whenever $\int\varphi(t,\hat{u}(t))d\mu=\inf_{u\in \U}\int\varphi(t,u(t))d\mu\in \R$. Define the \textit{Hamiltonian} $H:T\times E^*\to \R\cup \{ +\infty \}$ by 
$$
H(t,x^*):=\sup_{a\in U(t)}\left\{ \left\langle x^*,\Phi(t,a) \right\rangle-\varphi(t,a) \right\}. 
$$
We say that $u\in \U$ satisfies the \textit{maximum principle} for an adjoint variable $x^*\in E^*$ if  
\begin{equation}
\label{mp}
H(t,x^*)=\langle x^*,\Phi(t,u(t))\rangle-\varphi(t,u(t)) \quad\text{a.e.\ $t\in T$}. \tag{$\mathrm{MP}$}
\end{equation}

For a given $x\in E$, consider the perturbed problem for \eqref{P_0}: 
\begin{equation}
\label{P_x}
\begin{aligned}
& \inf_{u\in \M(T,X)} \int_T\varphi(t,u(t))d\mu \quad  \\
& \text{s.t. $\int_T \Phi(t,u(t))d\mu\in C+x$ and $u(t)\in U(t)$ a.e.\ $t\in T$}.
\end{aligned}
\tag{$\mathrm{P}_x$}
\end{equation}
It is obvious that problem \eqref{P_0} coincides with perturbed problem \eqref{P_x} with $x=0$. The set of perturbed feasible controls is given by
$$
\U_x:=\left\{ u\in \M(T,X) \mid \int_T \Phi(t,u(t))d\mu\in C+x,\,u(t)\in U(t)\ \text{a.e.\ $t\in T$} \right\}. 
$$
Denote the infimum value of \eqref{P_x} by
\begin{align*}
V(x):=
\begin{cases}
\displaystyle\inf_{u\in \U_x}\int_T\varphi(t,u(t))d\mu & \text{if $\U_x\ne \emptyset$}, \\
+\infty & \text{otherwise}.
\end{cases}
\end{align*}
Then $x\mapsto V(x)$ defines the \textit{value function} $V:E\to \overline{\R}$ with $V(0)=\inf_{u\in \U}\int\varphi(t,u(t))d\mu$ whenever $\U$ is nonempty.  

Throughout the rest of the paper we always assume that $E$ is a separable Banach space and $X$ is a separable metrizable space. The standing hypotheses on the primitive are given as follows.  

\begin{description}
\item[$\mathbf{(H_1)}$]  $\varphi:T\times X\to \R$ is a Carath\'eodory integrand;
\item[$\mathbf{(H_2)}$]  $\Phi:T\times X\to E$ is a Carath\'eodory function;
\item[$\mathbf{(H_3)}$]  $U:T\twoheadrightarrow X$ is a multifunction with $\mathrm{gph}\,U\in \Sigma\otimes \mathrm{Borel}(X)$.  
\end{description}

The relation between the adjoint inclusion and the maximum principle is formulated as follows. 

\begin{thm}
\label{thm1}
Let $C$ be a nonempty convex subset of $E$. Under hypotheses $\mathrm{(H_1)}$--\hspace{0pt}$\mathrm{(H_3)}$, the following conditions are equivalent.
\begin{enumerate}[\rm(i)]
\item $x^*\in \partial V(0)$ and $u$ is a solution to \eqref{P_0}. 
\item $-x^*\in N_C\left( \displaystyle\int_T\Phi(t,u(t))d\mu \right)$ and $u$ satisfies \eqref{mp}. 
\end{enumerate}
\end{thm}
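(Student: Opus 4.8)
The plan is to prove the equivalence by directly unwinding the definitions of the subdifferential, the normal cone, and the Hamiltonian, resting on the single structural fact that a control is feasible for the perturbed problem \eqref{P_x} precisely when its $\Phi$-integral, shifted by $x$, lands in $C$. Throughout I would fix the control $u$ of the statement and, for any $v\in\M(T,X)$ with $v(t)\in U(t)$ a.e., abbreviate $J(v):=\int_T\varphi(t,v(t))d\mu$ and $z(v):=\int_T\Phi(t,v(t))d\mu$; I call $v$ \emph{admissible} if in addition $\varphi(\cdot,v(\cdot))\in L^1(\mu)$ and $\Phi(\cdot,v(\cdot))$ is Bochner integrable, so that $J(v),z(v)$ are well defined. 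Writing $z:=z(u)$ and $g(t,a):=\langle x^*,\Phi(t,a)\rangle-\varphi(t,a)$, one has $H(t,x^*)=\sup_{a\in U(t)}g(t,a)$, condition \eqref{mp} reads $g(t,u(t))=H(t,x^*)$ a.e., and $\langle x^*,z(v)\rangle-J(v)=\int_T g(t,v(t))d\mu$. The key observation is that an admissible $v$ lies in $\U_x$ if and only if $z(v)-x\in C$.

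For (i) $\Rightarrow$ (ii) I would first extract the normal-cone inclusion: for each $c\in C$ set $x:=z-c$, so $u\in\U_x$ and hence $V(x)\le J(u)=V(0)$; the subgradient inequality then gives $\langle x^*,z-c\rangle=\langle x^*,x\rangle\le V(x)-V(0)\le 0$, and as $c\in C$ is arbitrary this is exactly $-x^*\in N_C(z)$. For the maximum principle I would run the same device against a competitor: for admissible $v$ put $x:=z(v)-z$; since $z\in C$, we have $v\in\U_x$, so $\langle x^*,z(v)-z\rangle\le V(x)-V(0)\le J(v)-J(u)$. Rearranging yields $\int_T g(t,v(t))d\mu\le\int_T g(t,u(t))d\mu$ for every admissible $v$, i.e.\ $u$ maximizes the $g$-integral; a pointwise maximization argument (discussed below) then upgrades this to $g(t,u(t))=H(t,x^*)$ a.e., which is \eqref{mp}.

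For (ii) $\Rightarrow$ (i) I would reverse these computations. Since $H(t,x^*)\ge g(t,v(t))$ for every selector $v$ of $U$ and \eqref{mp} gives equality at $u$, integrating produces the master inequality $J(v)-J(u)\ge\langle x^*,z(v)-z\rangle$ for every admissible $v$, while $-x^*\in N_C(z)$ says $\langle x^*,c-z\rangle\ge 0$ for all $c\in C$. Feeding $v\in\U_0$ (so $z(v)\in C$) into the master inequality makes its right-hand side nonnegative, whence $J(v)\ge J(u)$; thus $u$ solves \eqref{P_0} and $V(0)=J(u)\in\R$. Finally, for arbitrary $y\in E$ with $\U_y\ne\emptyset$ and any $v\in\U_y$ one has $z(v)-y\in C$, so the normal-cone inequality gives $\langle x^*,z(v)-z\rangle\ge\langle x^*,y\rangle$; combined with the master inequality this yields $J(v)\ge V(0)+\langle x^*,y\rangle$, and taking the infimum over $\U_y$ (the case $\U_y=\emptyset$ being vacuous, as then $V(y)=+\infty$) gives $V(y)-V(0)\ge\langle x^*,y\rangle$, i.e.\ $x^*\in\partial V(0)$.

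I expect the only genuine difficulty to lie in the pointwise maximization step of (i) $\Rightarrow$ (ii): passing from integral optimality of $u$ for $\int_T g(t,v(t))d\mu$ to the a.e.\ identity $g(t,u(t))=H(t,x^*)$. The idea is that if $g(t,u(t))<H(t,x^*)$ on a set of positive measure, then, because $g$ is a Carath\'eodory (hence normal) integrand by (H$_1$)--(H$_2$) and $\mathrm{gph}\,U\in\Sigma\otimes\mathrm{Borel}(X)$ by (H$_3$), the von Neumann--Aumann selection theorem yields a measurable selector of $U$ strictly improving $g$ on that set; splicing it with $u$ elsewhere contradicts integral optimality. The technical care here is to keep the spliced competitor admissible and all integrals finite---localizing to a subset on which $g(t,u(t))$ is bounded and the improvement exceeds a fixed $\varepsilon$---and, in the process, to confirm that $H(\cdot,x^*)$ is finite a.e., which is forced by the finiteness of $V(0)$.
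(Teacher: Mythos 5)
Your proposal is correct and follows essentially the same route as the paper's proof: the same perturbation choices $x=\int_T\Phi(t,u(t))d\mu-c$ and $x=\int_T\Phi(t,v(t))d\mu-\int_T\Phi(t,u(t))d\mu$ to extract the normal-cone inclusion and the master inequality \eqref{eq2}, the same measurable-selection-and-splicing contradiction for the pointwise maximum principle (the paper selects from an auxiliary multifunction $\Psi(t)\subset \R_{++}\times U(t)$ recording the improvement $r$, which is your von Neumann--Aumann step in different clothing), and the same reverse computations for (ii) $\Rightarrow$ (i). Your extra attention to admissibility of the spliced competitor is a refinement the paper leaves tacit; just note that bounding $g(t,u(t))$ and the improvement by themselves does not control $|\varphi(t,a)|$ or $\|\Phi(t,a)\|$ of the selector, so the localization should impose bounds on those quantities directly (e.g.\ intersect $U(t)$ with $\{a\mid |\varphi(t,a)|\le n,\ \|\Phi(t,a)\|\le n\}$ for a suitable $n$), and the finiteness of $H(\cdot,x^*)$ is not actually needed for the contradiction argument.
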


\begin{proof}
(i) $\Rightarrow$ (ii): The subgradient inequality for $V$ yields 
\begin{equation}
\label{eq1}
\langle x^*,x \rangle\le V(x)-V(0) \quad\text{for every $x\in E$}.
\end{equation} 
Take any $y\in C$ and set $x=\int\Phi(t,u(t))d\mu-y$. Since $u$ is a solution to \eqref{P_0} and $\int\Phi(t,u(t))d\mu\in C+x$, we have $V(x)\le \int\varphi(t,u(t))d\mu=V(0)$. The inclusion $-x^*\in N_C(\int\Phi(t,u(t))d\mu)$ follows from inequality \eqref{eq1} for this choice of $x$. To prove \eqref{mp}, choose any $v\in \M(T,X)$ with $v(t)\in U(t)$ a.e.\ $t\in T$ and set $x=\int\Phi(t,v(t))d\mu-\int\Phi(t,u(t))d\mu$. Since $\int\Phi(t,v(t))d\mu\in C+x$, we have $V(x)\le \int\varphi(t,v(t))d\mu$. It thus follows from \eqref{eq1} that 
\begin{equation}
\label{eq2}
\begin{aligned}
& \left\langle x^*,\int_T\Phi(t,v(t))d\mu-\int_T\Phi(t,u(t))d\mu \right\rangle  \\ {}\le{} 
& \int_T\varphi(t,v(t))d\mu-\int_T\varphi(t,u(t))d\mu. 
\end{aligned}
\end{equation}
To derive \eqref{mp} from \eqref{eq2}, assume that there exists $A\in \Sigma$ of positive measure such that for every $t\in A$ there exists $a\in U(t)$ satisfying $H(t,x^*)\ge \left\langle x^*,\Phi(t,a) \right\rangle-\varphi(t,a)>\left\langle x^*,\Phi(t,u(t)) \right\rangle-\varphi(t,u(t))$. Define the multifunction $\Psi:A\twoheadrightarrow \R_{++}\times X$ by 
\begin{align*}
& \Psi(t):=\left\{ (r,a)\in \R_{++}\times U(t)\,\left| \begin{array}{l}\langle x^*,\Phi(t,u(t)) \rangle-\varphi(t,u(t))+r \\ \quad =\langle x^*,\Phi(t,a) \rangle-\varphi(t,a) \end{array} \right.\right\}.
\end{align*}
Since $\mathrm{gph}\,\Psi\in \Sigma_A\otimes \mathrm{Borel}\,(\R_{++})\otimes \mathrm{Borel}\,(X)$ (see \cite[Corollary 18.8]{ab06}), $\Psi$ admits a measurable selector. Let $u_A:A\to X$ be the $X$-\hspace{0pt}component mapping of a measurable selector from $\Psi$. Define $v\in \M(T,X)$ by $v(t)=u_A(t)$ on $A$ and $v(t)=u(t)$ on $T\setminus A$. By construction, $v(t)\in U(t)$ a.e.\ $t\in T$, and $\langle x^*,\Phi(t,u(t)) \rangle-\varphi(t,u(t))<\langle x^*,\Phi(t,v(t)) \rangle-\varphi(t,v(t))$ on $A$ and $\langle x^*,\Phi(t,u(t)) \rangle-\varphi(t,u(t))=\langle x^*,\Phi(t,a) \rangle-\varphi(t,a)$ on $T\setminus A$. Integrating the inequality over $A$ and the equality over $T\setminus A$ and adding both sides of the integrals, we derive a contradiction to \eqref{eq2}. 

(ii) $\Rightarrow$ (i): Conversely, suppose that $-x^*\in N_{C}(\int\Phi(t,u(t))d\mu)$ and $u$ satisfies \eqref{mp}. Choose any $v\in \U$. Integrating both sides of the inequality $\varphi(t,v(t))-\varphi(t,u(t))\ge \langle x^*,\Phi(t,v(t))-\Phi(t,u(t)) \rangle$ that stems from \eqref{mp} yields inequality \eqref{eq1} with $\langle x^*,\int\Phi(t,v(t))d\mu-\int\Phi(t,u(t))d\mu \rangle\ge 0$, which follows from the fact that $\int\Phi(t,v(t))d\mu\in C$ and the definition of the normal cone $N_{C}(\int\Phi(t,u(t))d\mu)$. Hence, $u$ is a solution to \eqref{P_0}. To prove that $x^*\in \partial V(0)$, take any $x\in E$ such that $\U_x$ is nonempty. Since $\int\Phi(t,v(t))d\mu-x\in C$ for every $v\in \U_x$, we have $\left\langle x^*,\int\Phi(t,v(t))d\mu-x-\int\Phi(t,u(t))d\mu \right\rangle \ge 0$. It follows from \eqref{eq2} that $\int\varphi(t,v(t))d\mu-\int\varphi(t,u(t))d\mu\ge \langle x^*,x \rangle$ for every $v\in \U_x$. Therefore, we obtain $V(x)-V(0)\ge \langle x^*,x \rangle$ for every $x\in E$ with $\U_x\ne \emptyset$. 
\end{proof}

Define the multifunction $\Gamma_{\varphi,\Phi}^U:T\twoheadrightarrow \R\times E$ by 
$$
\Gamma_{\varphi,\Phi}^U(t):=\bigcup\left\{ (\varphi(t,a),\Phi(t,a)) \mid a\in U(t) \right\}.
$$
Namely, $\Gamma_{\varphi,\Phi}^U$ is obtained as the image of the set $U(t)$ under the vector-valued function $(\varphi(t,\cdot),\Phi(t,\cdot,)):X\to \R\times E$ for every $t\in T$. To employ the separation theorem in Banach spaces, we impose the following hypotheses which is peculiar to infinite dimensions. 
 
\begin{description}
\item[$\mathbf{(H_4)}$] Either $\displaystyle\left( \mathrm{int}\int_T\Gamma_{\varphi,\Phi}^U(t)d\mu \right)\cap (\R\times C)$ or $\displaystyle\int_T\Phi(t,U(t))d\mu\cap \mathrm{int}\,C$ is non\-empty. 
\end{description}
There is a trade-\hspace{0pt}off between the choice of the nonempty interior conditions. Although the nonemptiness of $\mathrm{int}\int\Gamma_{\varphi,\Phi}^Ud\mu$ seems stringent, we can deal with the situation in which $C$ is a singleton as in the resource allocation problem \eqref{P}. At the cost of the nonsingleton of $C$, the alternative hypothesis that $\mathrm{int}\,C$ is nonempty might be an innocuous condition in many applications. Note that, however,  whenever $C$ is a positive cone of an ordered Banach space, the class of such spaces with nonempty $\mathrm{int}\,C$ is very limited. For instance, the positive cone of $L^p$ spaces has an empty interior for $1\le p<\infty$. Typical separable Banach lattices such that the positive cone has a nonempty interior is the space of continuous functions on a compact metrizable space endowed with the sup norm topology. 

\begin{thm}
\label{thm2}
Let $(T,\Sigma,\mu)$ be a saturated finite measure space and $C$ be a nonempty convex subset of $E$. Under hypotheses $\mathrm{(H_1)}$--\hspace{0pt}$\mathrm{(H_4)}$, $u\in \U$ is a solution to \eqref{P_0} if and only if there exists 
$$
-x^*\in N_C\left( \int_T\Phi(t,u(t))d\mu \right)
$$ 
such that \eqref{mp} holds. 
\end{thm}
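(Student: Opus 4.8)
The plan is to deduce Theorem~\ref{thm2} from Theorem~\ref{thm1} by showing that, under saturation and (H4), the value function is subdifferentiable at the origin. The sufficiency (``if'') direction is immediate: if $-x^*\in N_C(\int_T\Phi(t,u(t))d\mu)$ and $u$ satisfies \eqref{mp}, then condition (ii) of Theorem~\ref{thm1} holds, whence (i) holds and $u$ is a solution to \eqref{P_0}; this uses neither saturation nor (H4). For the necessity (``only if'') direction, suppose $u\in\U$ is a solution. By the implication (i)$\Rightarrow$(ii) of Theorem~\ref{thm1}, it suffices to exhibit some $x^*\in\partial V(0)$; the desired $-x^*\in N_C(\int_T\Phi(t,u(t))d\mu)$ and \eqref{mp} then follow automatically. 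Thus the entire burden is to prove $\partial V(0)\ne\emptyset$.

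First I would record the convexity that saturation buys. Consider the multifunction $\Gamma_{\varphi,\Phi}^U:T\twoheadrightarrow\R\times E$ and its Bochner integral $K:=\int_T\Gamma_{\varphi,\Phi}^U d\mu$, whose elements are exactly the pairs $(\int_T\varphi(t,v(t))d\mu,\int_T\Phi(t,v(t))d\mu)$ arising from integrable feasible controls. Since $\R\times E$ is a separable Banach space, the implication (i)$\Rightarrow$(iv) of Proposition~\ref{lyp}, valid under saturation for every separable Banach space, guarantees that $K$ is convex. Writing $\Lambda:=K-(\{0\}\times C)$ and $\widehat\Lambda:=\Lambda+([0,\infty)\times\{0\})$, both sets are convex, and a direct check using the feasibility relation $\int_T\Phi(t,v(t))d\mu\in C+y$ shows $V(y)=\inf\{r\mid(r,y)\in\widehat\Lambda\}$. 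In particular $V$ is convex, the optimal pair $(V(0),\int_T\Phi(t,u(t))d\mu)$ lies in $K$, and $(V(0),0)$ lies on the lower boundary of $\widehat\Lambda$ (it cannot be interior, for otherwise $(V(0)-\varepsilon,0)\in\widehat\Lambda$ would contradict the definition of $V(0)$).

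Next I would produce a supporting hyperplane to $\widehat\Lambda$ at $(V(0),0)$. The role of (H4) is to ensure $\mathrm{int}\,\widehat\Lambda\ne\emptyset$: under the first alternative $\mathrm{int}\,K\ne\emptyset$ forces $\mathrm{int}\,\Lambda\ne\emptyset$, while under the second alternative a point $\bar e\in\int_T\Phi(t,U(t))d\mu\cap\mathrm{int}\,C$ makes $\bar e-C$ a neighborhood of $0$, so $\widehat\Lambda$ contains a set of the form $(\bar r,\infty)\times W$ with $W$ a neighborhood of $0$. With $\widehat\Lambda$ convex of nonempty interior and $(V(0),0)$ a boundary point, the supporting hyperplane theorem yields $(\alpha,\beta)\ne 0$ in $\R\times E^*$ with $\alpha r+\langle\beta,y\rangle\ge\alpha V(0)$ for all $(r,y)\in\widehat\Lambda$. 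Because $\widehat\Lambda$ is closed upward in the $r$-coordinate, letting $r\to+\infty$ gives $\alpha\ge 0$. The crux is to rule out $\alpha=0$: in either alternative of (H4) the projection of a neighborhood of an interior point shows that $\{y\mid(r,y)\in\widehat\Lambda\text{ for some }r\}$ contains a neighborhood of $0$, so $\alpha=0$ would force $\langle\beta,\cdot\rangle\ge 0$ near $0$ and hence $\beta=0$, a contradiction. This nondegeneracy step --- excluding a vertical supporting hyperplane --- is precisely where (H4) is genuinely used, and I expect it to be the main obstacle.

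Finally, normalizing $\alpha=1$ and setting $x^*:=-\beta$, the supporting inequality $r-\langle x^*,y\rangle\ge V(0)$ on $\widehat\Lambda$ reads, for every feasible $v\in\U_y$, as $\int_T\varphi(t,v(t))d\mu\ge V(0)+\langle x^*,y\rangle$, so that $V(y)\ge V(0)+\langle x^*,y\rangle$ for all $y\in E$; that is, $x^*\in\partial V(0)$. Evaluating the same inequality along $(V(0),\int_T\Phi(t,u(t))d\mu-c)\in\Lambda$ for $c\in C$ even recovers $-x^*\in N_C(\int_T\Phi(t,u(t))d\mu)$ directly. Invoking the implication (i)$\Rightarrow$(ii) of Theorem~\ref{thm1}, the pair $(x^*,u)$ then satisfies $-x^*\in N_C(\int_T\Phi(t,u(t))d\mu)$ together with \eqref{mp}, which completes the necessity direction and hence the proof.
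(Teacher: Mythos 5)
Your proposal is correct and is essentially the paper's own argument: both rest on Proposition \ref{lyp} (saturation makes $\int_T\Gamma^U_{\varphi,\Phi}\,d\mu$ convex), a Hahn--Banach separation/support step whose nondegeneracy (ruling out $\alpha=0$, i.e.\ a vertical hyperplane) is exactly what $\mathrm{(H_4)}$ delivers, and the machinery of Theorem \ref{thm1} to pass from the resulting linear inequality to the normal-cone inclusion and \eqref{mp}. The only difference is packaging: the paper separates $D=\{r\in\R\mid r<\int_T\varphi(t,u(t))\,d\mu\}\times C$ from $\int_T\Gamma^U_{\varphi,\Phi}(t)\,d\mu$ and reuses inequality \eqref{eq2} from inside the proof of Theorem \ref{thm1}, whereas you support the equivalent Minkowski set $\widehat{\Lambda}$ at the boundary point $(V(0),0)$ to conclude $\partial V(0)\ne\emptyset$ and then invoke the statement (i)$\Rightarrow$(ii) of Theorem \ref{thm1} directly --- the same hyperplane in different clothing.
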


\begin{proof}
Suppose that $u\in \U$ is a solution to \eqref{P_0}. In view of the proof of Theorem \ref{thm1}, it suffices to show that there exists $-x^*\in N_{C}(\int\Phi(t,u(t))d\mu)$ such that inequality \eqref{eq2} holds for every $v\in \M(T,X)$ with $v(t)\in U(t)$ a.e.\ $t\in T$. Toward this end, define the convex subset $D$ of $\R\times E$ by $D:=\{ r\in \R\mid r<\int\varphi(t,u(t))d\mu \}\times C$. Note that either $\mathrm{int}\int\Gamma_{\varphi,\Phi}^U d\mu$ or $\mathrm{int}\,D$ is nonempty in view of $\mathrm{(H_4)}$ and $\int\Gamma_{\varphi,\Phi}^U d\mu$ is convex because of Proposition \ref{lyp}. Since $u$ is a solution to \eqref{P_0}, we have $D\cap \int\Gamma_{\varphi,\Phi}^U d\mu=\emptyset$. By the separation theorem (see \cite[Theorem 5.67]{ab06}), there exists $(\alpha,y^*)\in (\R\times E^*)\setminus \{ 0 \}$ that separates the closure $\overline{D}$ of $D$ and $\int\Gamma_{\varphi,\Phi}^U d\mu$, that is, 
\begin{equation}
\label{eq4}
\alpha r+\langle y^*,x \rangle\le \alpha s+\langle y^*,y \rangle
\end{equation}
for every $(r,x)\in \overline{D}$ and $(s,y)\in \int\Gamma_{\varphi,\Phi}^U d\mu$. It follows from the inclusion 
$$
\int_T\left( \varphi(t,u(t)),\Phi(t,u(t)) \right)d\mu\in \overline{D}\cap \int_T\Gamma_{\varphi,\Phi}^U(t)d\mu
$$ 
that $\langle y^*,x \rangle\le \langle y^*,\int\Phi(t,u(t))d\mu \rangle$ for every $x\in C$. This means that $y^*\in N_C(\int\Phi(t,u(t))d\mu)$. It follows from \eqref{eq4} that $\alpha\ge 0$. Assume that $\alpha=0$. We then have $\langle y^*,x \rangle\le \langle y^*,y \rangle$ for every $x\in C$ and $y\in \int\Phi(t,U(t))d\mu$. If $(\mathrm{int}\int\Gamma_{\varphi,\Phi}^U d\mu)\cap (\R\times C)$ is nonempty, then there is $y_0\in (\mathrm{int}\int\Phi(t,U(t))d\mu)\cap C$ such that for every $h\in E$ we have $y_0\pm \varepsilon h\in \mathrm{int}\int\Phi(t,U(t))d\mu$ for all sufficiently small $\varepsilon>0$, and hence, $\langle y^*,x \rangle\le \langle y^*,y_0\pm\varepsilon h \rangle$ for every $x\in C$. Letting $x=y_0$ inevitably yields $y^*=0$, a contradiction. If $\int\Phi(t,U(t))d\mu\cap\mathrm{int}\,C$ is nonempty, then there exists $x_0\in\int\Phi(t,U(t))d\mu\cap \mathrm{int}\,C$ such that for every $h\in E$ we have $x_0\pm \varepsilon h\in \mathrm{int}\,C$ for all sufficiently small $\varepsilon>0$, and hence, $\langle y^*,x_0\pm\varepsilon h \rangle\le \langle y^*,y \rangle$ for every $y\in \int\Phi(t,U(t))d\mu$. Letting $y=x_0$ inevitably yields $y^*=0$, a contradiction again. Henceforth, $\alpha>0$. The normalization $x^*=-y^*/\alpha$ in \eqref{eq4} implies inequality \eqref{eq2} with $-x^*\in  N_{C}(\int\Phi(t,u(t))d\mu)$. 

The converse implication is evident from Theorem \ref{thm1}.  
\end{proof}

\begin{description}
\item[$\mathbf{(H_4')}$] $\displaystyle\int_T\Phi(t,U(t))d\mu\cap C$ is nonempty. 

\item[$\mathbf{(H_5)}$] $\Gamma_{\varphi,\Phi}^U:T\twoheadrightarrow \R\times E$ is integrably bounded with weakly compact values. 
\end{description}

The next result characterizes the saturation of measure spaces in terms of the existence of solutions to \eqref{P_0}.  

\begin{thm}
\label{thm3}
Let $(T,\Sigma,\mu)$ be a nonatomic finite measure space and $C$ be a nonempty weakly closed subset of $E$. If $(T,\Sigma,\mu)$ is saturated, then for every quartet $(\varphi,\Phi,U,C)$ satisfying hypotheses $\mathrm{(H_1)}$--\hspace{0pt}$\mathrm{(H_3)}$, $\mathrm{(H_4')}$, and $\mathrm{(H_5)}$, a solution to \eqref{P_0} exists. Conversely, if a solution to \eqref{P_0} exists for every quartet $(\varphi,\Phi,U,C)$ satisfying $\mathrm{(H_1)}$--\hspace{0pt}$\mathrm{(H_3)}$, $\mathrm{(H_4')}$, and $\mathrm{(H_5)}$, then $(T,\Sigma,\mu)$ is saturated whenever $E$ is infinite dimensional and $X$ is an uncountable compact Polish space.  
\end{thm}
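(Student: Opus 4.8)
For the existence direction, the plan is to minimize a linear functional over the integral of the graph multifunction. Form $\Gamma^U_{\varphi,\Phi}:T\twoheadrightarrow\R\times E$, which has measurable graph by $\mathrm{(H_1)}$--$\mathrm{(H_3)}$ and, by $\mathrm{(H_5)}$, is integrably bounded with weakly compact values. Since $\R\times E$ is a separable Banach space and $(T,\Sigma,\mu)$ is saturated, Proposition \ref{lyp} together with the compactness results of \cite{po08,sy08} shows that $K:=\int_T\Gamma^U_{\varphi,\Phi}d\mu$ is convex and weakly compact. The key identity is that $K$ is exactly the set of attainable pairs $\bigl(\int_T\varphi(t,u(t))d\mu,\int_T\Phi(t,u(t))d\mu\bigr)$ as $u$ ranges over controls with $u(t)\in U(t)$ a.e.; the nontrivial inclusion is recovered by the measurable selection argument already used for the multifunction $\Psi$ in the proof of Theorem \ref{thm1}. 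Feasibility $\mathrm{(H_4')}$ says precisely that the projection of $K$ onto $E$ meets $C$, so the feasible image $K\cap(\R\times C)$ is nonempty, and it is weakly compact because $C$ is weakly closed. Minimizing the weakly continuous linear functional $(r,x)\mapsto r$ over this set attains its infimum at some $(r_*,x_*)\in K$ with $x_*\in C$; lifting $(r_*,x_*)$ back to a control $\hat u$ yields a feasible control whose cost $r_*$ is, by the same identity, the infimum value of \eqref{P_0}. Integrable boundedness forces $r_*\in\R$, so $\hat u$ is a solution.

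For the converse I argue by contraposition: assuming $(T,\Sigma,\mu)$ is nonatomic but not saturated, I construct a quartet satisfying $\mathrm{(H_1)}$--$\mathrm{(H_3)}$, $\mathrm{(H_4')}$, $\mathrm{(H_5)}$ that admits no solution. Since $E$ is infinite dimensional and separable, the implications of Proposition \ref{lyp} read in the contrapositive direction (valid for such $E$) show that non-saturation furnishes an integrably bounded, measurable multifunction $\Gamma_0:T\twoheadrightarrow E$ with $\int_T\Gamma_0 d\mu\subsetneq\int_T\overline{\mathrm{co}}\,\Gamma_0 d\mu$; equivalently, there is a relaxation (purification) gap, a point of the convexified integral lying in the weak closure of $\int_T\Gamma_0 d\mu$ but not in $\int_T\Gamma_0 d\mu$ itself. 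Arranging this witness to be \emph{norm} compact valued is part of the construction, since a Carath\'eodory $\Phi$ is norm continuous in $a$ and therefore cannot parametrize a merely weakly compact valued multifunction; I note here that finitely or countably valued densities give convex, hence harmless, ranges on a nonatomic space, so the witness is necessarily continuum valued. Because $X$ is an uncountable compact Polish space it contains a homeomorphic copy of the Cantor set, and a measurable parametrization theorem for compact-valued multifunctions then lets me realize $\Gamma_0$ as $\Phi(t,U(t))$ for a Carath\'eodory $\Phi:T\times X\to E$ with $U\equiv X$; $\mathrm{(H_5)}$ holds automatically as continuous images of $X$ are norm compact.

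The delicate point, where I expect the main difficulty to lie, is to turn the gap into genuine \emph{non-attainment of a feasible problem}. A single linear objective, or an objective constrained to an affine subspace, can always be solved by pointwise minimization of the Carath\'eodory integrand followed by a measurable selection, so such problems always attain and the non-saturation defect is invisible to them. The coupling must therefore be forced through an integral \emph{equality} constraint: I would take $C=\{x_0\}$, a singleton (weakly closed and convex) at a state $x_0$ that is ordinarily reachable, and choose $\varphi$ so that the minimal cost of steering $\int_T\Phi(t,u(t))d\mu$ exactly to $x_0$ equals the relaxed value but is approached only in the limit. Then $\mathrm{(H_4')}$ holds because $x_0$ is reachable, yet no control attains the cost infimum, contradicting the standing hypothesis that solutions always exist.

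The crux is thus the simultaneous arrangement of \emph{reachability} of the target (needed for $\mathrm{(H_4')}$) and \emph{non-attainment} of the cost infimum over the controls reaching it. I expect this to require an explicit construction carried out on a set $S$ of positive measure which, by nonatomicity and the separability of $L^1_S(\mu)$ forced by the failure of saturation, is measure-isomorphic to an interval with Lebesgue measure; on such an $S$ one transplants the classical infinite-dimensional failure of the Lyapunov convexity theorem, choosing the cost direction so that the lowest point of the fiber of $\int_T\overline{\mathrm{co}}\,\Gamma_0 d\mu$ over $x_0$ is a weak-closure limit of, but not a member of, the fiber of $\int_T\Gamma_0 d\mu$. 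Verifying that this point is simultaneously feasible and non-attained, while keeping the whole datum within the Carath\'eodory framework over the prescribed $X$, is the step I anticipate to be the most technical.
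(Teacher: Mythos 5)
Your existence half is correct and is essentially the paper's own argument in slightly different packaging. The paper takes a minimizing sequence, observes via Proposition \ref{lyp} (through the results of \cite{po08,sy08}) that $\int_T\Gamma^U_{\varphi,\Phi}\,d\mu$ is weakly compact and convex, extracts a weak limit by Eberlein--\v{S}mulian, and lifts the limit to an admissible control by the Filippov implicit function theorem; you instead minimize the weakly continuous functional $(r,x)\mapsto r$ directly over $K\cap(\R\times C)$ and lift the minimizer. These are the same two ingredients --- saturation-based weak compactness/convexity of the integral of the graph multifunction, plus a Filippov-type measurable selection, which is exactly what your appeal to the $\Psi$-argument of Theorem \ref{thm1} amounts to --- and your use of weak closedness of $C$ to get compactness of $K\cap(\R\times C)$ matches the paper's use of it to keep the weak limit feasible. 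This half stands.

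The converse is where your proposal has a genuine gap: you describe a strategy but never produce the quartet. You correctly identify the plan --- argue by contraposition; on a non-saturated nonatomic space find $S$ of positive measure with $L^1_S(\mu)$ separable, hence measure-algebra isomorphic to Lebesgue measure on an interval; transplant the classical infinite-dimensional failure of the Lyapunov convexity theorem; take $C=\{x_0\}$ with $x_0$ reachable so that $\mathrm{(H_4')}$ holds, while the cost infimum over controls reaching $x_0$ is approached but unattained --- but you yourself flag the simultaneous arrangement of reachability and non-attainment as ``the most technical'' step and leave it unexecuted. That step \emph{is} the content of the converse: without an explicit $(\varphi,\Phi,U,C)$ satisfying $\mathrm{(H_1)}$--$\mathrm{(H_3)}$, $\mathrm{(H_4')}$, $\mathrm{(H_5)}$ whose infimum is not attained, nothing is proved. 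For comparison, the paper does not carry out this construction either; it invokes \cite[Theorem 4.4]{sa17}, remarking that the Gelfand-integral proof there is valid verbatim in the Bochner setting, and your sketch is in effect an outline of that proof stopped at its decisive point. A minor additional wobble: your parenthetical claim that a Carath\'eodory $\Phi$ ``cannot parametrize a merely weakly compact valued multifunction'' is overstated, since $U(t)$ need not equal the compact space $X$, so $\Phi(t,U(t))$ need not be norm compact; norm compactness of values is a convenience of the construction with $U\equiv X$, not a constraint forced by $\mathrm{(H_2)}$ and $\mathrm{(H_5)}$. This does not affect the structure of your plan, but the plan itself remains incomplete where completeness matters.
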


\begin{proof}
Let $\{ u_n \}_{n\in \N}\subset \M(T,X)$ be a minimizing sequence of \eqref{P_0} satisfying $\int \varphi(t,u_n(t))d\mu\to \inf_{u\in \U}\int\varphi(t,u(t))d\mu$ with $\int\Phi(t,u_n(t))d\mu\in C$ and $u_n(t)\in U(t)$ a.e.\ $t\in T$ for each $n\in \N$. Since it follows from Proposition \ref{lyp} that the Bochner integral $\int\Gamma_{\varphi,\Phi}^Ud\mu$ is a weakly compact set containing the sequence $\{ \int(\varphi(t,u_n(t)),\Phi(t,u_n(t)))d\mu \}_{n\in \N}$, there exists a subsequence of it (which we do not relabel) such that $(r,x):=\mathit{w}\text{-}\lim_n\int (\varphi(t,u_n(t)),\Phi(t,u_n(t)))d\mu\in \int\Gamma_{\varphi,\Phi}^Ud\mu$ by the Eberlein--\hspace{0pt}\v{S}mulian theorem (the coincidence of weak compactness and sequential weak compactness); see \cite[Theorem 6.34]{ab06}. By virtue of the Filippov implicit function theorem (see \cite[Theorem 18.17]{ab06}), there exists $\hat{u}\in \M(T,X)$ with $\hat{u}(t)\in U(t)$ a.e.\ $t\in T$ such that $(r,x)=\int(\varphi(t,\hat{u}(t)),\Phi(t,\hat{u}(t)))d\mu$. We thus obtain 
$$
\int_T\varphi(t,\hat{u}(t))d\mu=\lim_{n\to\infty}\int_T\varphi(t,u_n(t))d\mu=\inf_{u\in \U}\int_T\varphi(t,u(t))d\mu
$$ 
and 
$$
\int_T\Phi(t,\hat{u}(t))d\mu=\mathit{w}\text{-}\lim_{n\to \infty}\int_T\Phi(t,u_n(t))d\mu\in C.
$$
Therefore, $\hat{u}$ is a solution to \eqref{P_0}. 

The converse implication is demonstrated in \cite[Theorem 4.4]{sa17} for the dual space setting with Gelfand integrals, but the proof per se is valid for the Bochner integral case. 
\end{proof}

The nonemptiness of $\partial V(x)$ at every point $x$ in the interior of $\mathrm{dom}\,V$ is guaranteed in the following result. 

\begin{thm}
\label{thm4}
Let $(T,\Sigma,\mu)$ be a saturated finite measure space and $C$ be a nonempty, closed, convex subset of $E$. Under hypotheses $\mathrm{(H_1)}$--\hspace{0pt}$\mathrm{(H_3)}$, $\mathrm{(H_4')}$, and $\mathrm{(H_5)}$, $V$ is a proper convex function that is continuous on $\mathrm{int}(\mathrm{dom}\,V)$. Furthermore, $V$ is weakly lower semicontinuous on $E$.  
\end{thm}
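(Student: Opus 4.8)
The plan is to reduce the analysis of $V$ to the geometry of a single convex set in $\R\times E$ and then read off all four assertions from its structure. As in the proof of Theorem \ref{thm3}, I would set $K:=\int_T\Gamma_{\varphi,\Phi}^U(t)\,d\mu$; by Proposition \ref{lyp} and $\mathrm{(H_5)}$ this set is convex and weakly compact in $\R\times E$. The Filippov implicit function theorem (as invoked in the proof of Theorem \ref{thm3}) identifies $K$ with the set of attainable moments, $K=\{\int_T(\varphi(t,u(t)),\Phi(t,u(t)))\,d\mu\mid u\in\M(T,X),\ u(t)\in U(t)\ \text{a.e.}\}$. Since a control $u$ lies in $\U_x$ precisely when the $E$-component $y$ of its moment $(s,y)\in K$ satisfies $y-x\in C$, I would record the key identity
\[
V(x)=\inf\{\,s\in\R\mid (s,x)\in A\,\},\qquad A:=K+\bigl(\{0\}\times(-C)\bigr).
\]
Because $K$ and $C$ are convex, the Minkowski sum $A$ is convex.

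Convexity and properness then drop out. As $V$ is the marginal (infimal projection) function obtained by minimizing the first coordinate of the convex set $A$ over each fiber, it is convex by the standard epigraphical argument. For properness, $\mathrm{(H_4')}$ supplies a feasible control at $x=0$, so $0\in\mathrm{dom}\,V$; and since the first coordinate of every point of $A$ already occurs as a first coordinate of a point of $K$, the integrable bound $\psi\in L^1(\mu)$ from $\mathrm{(H_5)}$ controls it, giving $|s|\le\int_T\psi\,d\mu$ on $K$. Hence $V$ is bounded below and $V>-\infty$ everywhere.

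The heart of the argument is lower semicontinuity. I would first establish norm sequential lower semicontinuity: given $x_n\to x$ with $L:=\liminf_nV(x_n)<+\infty$, pass to a subsequence realizing the liminf, choose near-optimal $u_n\in\U_{x_n}$, and let $(s_n,y_n)\in K$ be their moments, so that $s_n\le V(x_n)+1/n$ and $y_n-x_n\in C$. Weak compactness of $K$ with the Eberlein--\v{S}mulian theorem yields a further subsequence along which $(s_n,y_n)$ converges weakly to some $(s,y)\in K$; then $s=\lim_n s_n\le L$, while $y_n-x_n\rightharpoonup y-x$, and since $C$ is closed and convex, hence weakly closed, $y-x\in C$. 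Consequently $(s,x)\in A$ and $V(x)\le s\le L$. As the norm topology is metrizable, $V$ is norm lower semicontinuous; and because $V$ is convex, its sublevel sets are convex and norm closed, hence weakly closed, so $V$ is weakly lower semicontinuous on $E$.

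Finally, continuity on $\mathrm{int}(\mathrm{dom}\,V)$ rests on the fact that a proper convex function that is norm lower semicontinuous on a Banach space is automatically locally bounded above on the interior of its domain. Assuming this set is nonempty, I would fix a closed ball contained in $\mathrm{int}(\mathrm{dom}\,V)$; it is covered by the closed convex sublevel sets $\{V\le n\}$, so the Baire category theorem forces one of them to have nonempty interior, whence $V$ is bounded above on an open ball, and the usual convexity estimate upgrades this to local Lipschitz continuity throughout $\mathrm{int}(\mathrm{dom}\,V)$. The main obstacle is the lower semicontinuity step: in infinite dimensions it genuinely requires the weak compactness of $K$ furnished by saturation through Proposition \ref{lyp}, without which the weak limit $(s,y)$ need not return to $K$ and the infimum defining $V$ could fail to be realized in the limit.
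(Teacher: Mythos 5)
Your proof is correct, and it rests on the same foundation as the paper's: the weakly compact convex set $K=\int_T\Gamma^U_{\varphi,\Phi}(t)\,d\mu$ furnished by saturation via Proposition \ref{lyp} together with $\mathrm{(H_5)}$, the identification of $K$ with the set of attainable moments via the Filippov implicit function theorem, and the marginal (infimal projection) representation of $V$ --- the paper's displayed identity $V(x)=\min\{r\in\R\mid (r,y)\in\int_T\Gamma^U_{\varphi,\Phi}(t)\,d\mu\cap(\R\times(C+x))\}$ is exactly your representation via $A=K+(\{0\}\times(-C))$, so convexity and properness are handled identically. Where you genuinely diverge is in the two analytic steps. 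For lower semicontinuity the paper works directly in the weak topology with nets: it takes a net $(r_\alpha,x_\alpha)$ in $\mathrm{epi}\,V$ converging weakly outside $\mathrm{epi}\,V$, invokes Theorem \ref{thm3} (with $C$ replaced by $C+x_\alpha$) to produce \emph{exact} minimizers $u_\alpha$, pushes their moments through the weak compactness of $K$ and Filippov, and derives a contradiction. You instead prove norm sequential lower semicontinuity using merely near-optimal controls with $s_n\le V(x_n)+1/n$, then upgrade to weak lower semicontinuity by Mazur's theorem (convex, norm-closed sublevel sets are weakly closed). Your route buys two things: it avoids nets entirely, and it decouples the semicontinuity of $V$ from the existence theorem, since no attainment of the infimum is needed; the paper's route, in exchange, reuses Theorem \ref{thm3} and stays self-contained in the weak topology. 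For continuity on $\mathrm{int}(\mathrm{dom}\,V)$, the paper simply observes that integrable boundedness gives the uniform bound $V\le\int_T\psi\,d\mu$ on all of $\mathrm{dom}\,V$ and cites the standard fact that a convex function bounded above on a neighborhood of a point is continuous there; your Baire category argument for lsc convex functions is valid (and your caveat that it needs the norm closedness of the sublevel sets, hence the lsc step, is correctly placed), but it is heavier than necessary --- you had already extracted the bound $|s|\le\int_T\psi\,d\mu$ on $K$ in your properness step, and the very same bound yields $V\le\int_T\psi\,d\mu$ on $\mathrm{dom}\,V$, which makes the Baire step superfluous.
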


\begin{proof}
Since the Bochner integral $\int\Gamma_{\varphi,\Phi}^U d\mu$ is weakly compact and convex by Proposition \ref{lyp}, the convexity of $V$ follows from the equality
$$
V(x)=\min\left\{ r\in \R\mid (r,y)\in \int_T\Gamma_{\varphi,\Phi}^U(t)d\mu\cap \left( \R\times (C+x) \right) \right\}. 
$$
The properness of $V$ follows from the fact that $\Gamma_{\varphi,\Phi}^U$ is integrably bounded and $0\in \mathrm{dom}\,V$. Since $V$ is bounded from above on $\mathrm{dom}\,V$, it is continuous on $\mathrm{int}(\mathrm{dom}\,V)$; see \cite[Theorem 3.2.1]{it74}.

To demonstrate the weak lower semicontinuity of $V$, it suffices to show that $\mathrm{epi}\,V=\{ (r,x)\in \R\times E\mid V(x)\le r \}$ is weakly closed. Toward this end, let $\{ (r_\alpha,x_\alpha) \}_{\alpha\in \Lambda}$ be a net in $\mathrm{epi}\,V$ that converges weakly to $(r,x)\in \R\times E$. Assume, by way of contradiction, that $(r,x)$ does not belong to $\mathrm{epi}\,V$. We then have $\lim_\alpha r_\alpha=r<V(x)$. Since $V(x_\alpha)\le r_\alpha$, extracting a subnet from $\{ x_\alpha \}_{\alpha\in \Lambda}$ (which we do not relabel) yields $\U_{x_\alpha}\ne \emptyset$ for each $\alpha\in \Lambda$. Replacing $C$ by $C+x_\alpha$ in Theorem \ref{thm3} guarantees that there exists a net $\{ u_\alpha \}_{\alpha\in \Lambda}$ in $\M(T,X)$ such that $V(x_\alpha)=\int\varphi(t,u_\alpha(t))d\mu$, $\int\Phi(t,u_\alpha(t))d\mu\in C+x_\alpha$, and $u_\alpha(t)\in U(t)$ for each $\alpha\in \Lambda$ a.e.\ $t\in T$. Since the weakly compact set $\int\Gamma_{\varphi,\Phi}^Ud\mu$ contains the net $\{ \int(\varphi(t,u_\alpha(t)),\Phi(t,u_\alpha(t)))d\mu \}_{\alpha\in \Lambda}$, there exists a subnet of it (which we do not relabel) such that 
$$
(s,y):=\mathit{w}\text{-}\lim_\alpha\int_T(\varphi(t,u_\alpha(t)),\Phi(t,u_\alpha(t)))d\mu\in \int_T\Gamma_{\varphi,\Phi}^U(t)d\mu.
$$
By virtue of the Filippov implicit function theorem, there exists $u\in \M(T,X)$ with $u(t)\in U(t)$ a.e.\ $t\in T$ such that $(s,y)=\int(\varphi(t,u(t)),\Phi(t,u(t)))d\mu$. Since $\lim_{\alpha}\int\varphi(t,u_\alpha(t))d\mu=\int\varphi(t,u(t))d\mu$ and $$
\mathit{w}\text{-}\lim_{\alpha}\int_T\Phi(t,u_\alpha(t))d\mu=\int_T\Phi(t,u(t))d\mu\in C+x,
$$
we obtain $r<V(x)\le \int\varphi(t,u(t))d\mu=\lim_\alpha V(x_\alpha)\le \lim_{\alpha}r_\alpha=r$, a contradiction. This means that $\mathrm{epi}\,V$ is weakly closed. 
\end{proof}

\subsection{The Case for Gelfand Integrals}
Replacing the Bochner integral constraint in \eqref{P_0} by the Gelfand integral one in the dual space setting enables us to treat dual spaces of a separable Banach space. Typical dual spaces encompassed in this framework are $L^\infty$ spaces with a countably generated $\sigma$-\hspace{0pt}algebra (the dual spaces of a separable $L^1$ space) and the space of Radon measures on a compact metrizable space (the dual space of the space of continuous functions on a compact metrizable space). 

The \textit{subdifferential} of the extended real-valued function $V:E^*\to \overline{\R}$ at $x^*\in E^*$ is the set given by $\partial V(x^*)=\{ x\in E\mid \langle y^*-x^*,x \rangle\le V(y^*)-V(x^*) \ \forall y^*\in E^*\}$, where we set $\partial V(x^*)=\emptyset$ whenever $V(x^*)\not\in \R$. An element in $\partial V(x^*)$ is called a \textit{subgradient} of $V$ at $x^*$. For a convex subset $C$ of $E^*$, the \textit{normal cone} of $C$ at $x^*\in C$ is the set $N_C(x^*)=\{ x\in E\mid \langle y^*-x^*,x \rangle\le 0 \ \forall y^*\in C \}$.   

Let $\Phi:T\times X\to E^*$ be a measurable function (here the measurability is with respect to $\mathrm{Borel}\,(E^*,\| \cdot \|)$ and $C$ be a subset of $E^*$. The variational problem under investigation is
\begin{equation}
\label{P^*_0}
\begin{aligned}
& \inf_{u\in \M(T,X)} \int_T\varphi(t,u(t))d\mu \quad  \\
& \text{s.t. $\mathit{w}^*\text{-}\int_T \Phi(t,u(t))d\mu\in C$ and $u(t)\in U(t)$ a.e.\ $t\in T$}.
\end{aligned}
\tag{$\mathrm{P}^*_0$}
\end{equation}
The set of feasible controls is defined by
$$
\U:=\left\{ u\in \M(T,X) \mid \mathit{w}^*\text{-}\int_T \Phi(t,u(t))d\mu\in C,\,u(t)\in U(t)\ \text{a.e.\ $t\in T$} \right\}. 
$$
Define the \textit{Hamiltonian} $H:T\times E\to \R\cup \{ +\infty \}$ by 
$$
H(t,x):=\sup_{a\in U(t)}\left\{ \left\langle \Phi(t,a),x \right\rangle-\varphi(t,a) \right\}. 
$$
We say that $u\in \U$ satisfies the \textit{maximum principle} for an adjoint variable $x\in E$ if  
\begin{equation}
\label{mp*}
H(t,x)=\left\langle \Phi(t,u(t)),x \right\rangle-\varphi(t,u(t)) \quad\text{a.e.\ $t\in T$}. \tag{$\mathrm{MP}^*$}
\end{equation}

For a given $x^*\in E^*$, consider the perturbed problem for \eqref{P^*_0}:
\begin{equation}
\label{P^*_x^*}
\begin{aligned}
& \inf_{u\in \M(T,X)} \int_T\varphi(t,u(t))d\mu \quad  \\
& \text{s.t. $\mathit{w}^*\text{-}\int_T \Phi(t,u(t))d\mu\in C+x^*$ and $u(t)\in U(t)$ a.e.\ $t\in T$}.
\end{aligned}
\tag{$\mathrm{P}^*_{x^*}$}
\end{equation}
It is obvious that problem \eqref{P^*_0} coincides with perturbed problem \eqref{P^*_x^*} with $x^*=0$. The set of perturbed feasible controls is given by
$$
\U_{x^*}:=\left\{ u\in \M(T,X) \mid \mathit{w}^*\text{-}\int_T \Phi(t,u(t))d\mu\in C+x^*,\,u(t)\in U(t)\ \text{a.e.\ $t\in T$} \right\}. 
$$
Denote the optimum value of the perturbed problem of \eqref{P^*_0} by
\begin{align*}
V(x^*):=
\begin{cases}
\displaystyle\inf_{u\in \U_{x^*}}\int_T\varphi(t,u(t))d\mu & \text{if $\U_{x^*}\ne \emptyset$}, \\
+\infty & \text{otherwise}.
\end{cases}
\end{align*}
Then $x^*\mapsto V(x^*)$ defines the \textit{value function} $V:E^*\to \overline{\R}$ with $V(0)=\inf_{u\in \U}\int\varphi(t,u(t))d\mu$ whenever $\U$ is nonempty.  

For the dual space setting, hypothesis $\mathrm{(H_2)}$ is replaced by: 

\begin{description}
\item[$\mathbf{(H_2^*)}$]  $\Phi:T\times X\to E^*$ is a Carath\'eodory function.
\end{description}

\begin{thm}
\label{thm5}
Let $C$ be a nonempty convex subset of $E^*$. Under hypotheses $\mathrm{(H_1)}$, $\mathrm{(H_2^*)}$, and $\mathrm{(H_3)}$, the following conditions are equivalent.
\begin{enumerate}[\rm(i)]
\item $x\in \partial V(0)$ and $u$ is a solution to \eqref{P^*_0}. 
\item $-x\in N_C\left( \displaystyle\mathit{w}^*\text{-}\int_T\Phi(t,u(t))d\mu \right)$ and $u$ satisfies \eqref{mp*}. 
\end{enumerate}
\end{thm}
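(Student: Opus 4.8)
The plan is to transcribe the proof of Theorem \ref{thm1} to the dual space setting, swapping the roles of $E$ and $E^*$: the adjoint variable $x$ now lives in the predual $E$, the state constraint is the Gelfand integral $\mathit{w}^*\text{-}\int_T\Phi(t,u(t))d\mu\in E^*$, and the subdifferential $\partial V(0)$ consists of elements of $E$ paired against arguments $y^*\in E^*$ through $\langle y^*,x\rangle$. The single structural fact that makes the transcription legitimate is that the Gelfand integral commutes with every fixed test element of the predual: by the very definition of the Gelfand integral, $\langle \mathit{w}^*\text{-}\int_T\Phi(t,u(t))d\mu,\,x\rangle=\int_T\langle \Phi(t,u(t)),x\rangle d\mu$ for each $x\in E$. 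This plays exactly the role that the commutation of the Bochner integral with the continuous functional $x^*$ played in Theorem \ref{thm1}.

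For (i) $\Rightarrow$ (ii), I would start from the subgradient inequality $\langle y^*,x\rangle\le V(y^*)-V(0)$, valid for all $y^*\in E^*$. Testing it at $y^*=\mathit{w}^*\text{-}\int_T\Phi(t,u(t))d\mu-z^*$ for an arbitrary $z^*\in C$, and using that $u$ is a solution so that $V(y^*)\le\int_T\varphi(t,u(t))d\mu=V(0)$, yields $\langle \mathit{w}^*\text{-}\int_T\Phi(t,u(t))d\mu-z^*,\,x\rangle\le 0$, which is precisely $-x\in N_C(\mathit{w}^*\text{-}\int_T\Phi(t,u(t))d\mu)$. Testing the same inequality at $y^*=\mathit{w}^*\text{-}\int_T\Phi(t,v(t))d\mu-\mathit{w}^*\text{-}\int_T\Phi(t,u(t))d\mu$ for any admissible $v$ produces the dual analogue of \eqref{eq2}. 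To upgrade this to the pointwise maximum principle \eqref{mp*}, I would argue by contradiction exactly as before: if \eqref{mp*} failed on a set $A$ of positive measure, I would introduce the multifunction
$$
\Psi(t):=\left\{(r,a)\in\R_{++}\times U(t)\ \left|\ \langle \Phi(t,u(t)),x\rangle-\varphi(t,u(t))+r=\langle \Phi(t,a),x\rangle-\varphi(t,a)\right.\right\},
$$
extract a measurable selector, splice its $X$-component into $u$ on $A$, and integrate over $A$ and $T\setminus A$ to contradict the dual analogue of \eqref{eq2}. The selection step is legitimate because, under $\mathrm{(H_1)}$ and $\mathrm{(H_2^*)}$, the scalar function $(t,a)\mapsto\langle \Phi(t,a),x\rangle-\varphi(t,a)$ is a Carath\'eodory integrand, so $\mathrm{gph}\,\Psi$ is product measurable.

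For (ii) $\Rightarrow$ (i), I would integrate the inequality $\varphi(t,v(t))-\varphi(t,u(t))\ge\langle \Phi(t,v(t))-\Phi(t,u(t)),x\rangle$ that stems from \eqref{mp*}, invoke the Gelfand commutation identity together with $-x\in N_C$ and $\mathit{w}^*\text{-}\int_T\Phi(t,v(t))d\mu\in C$ to conclude optimality of $u$, and then run the perturbed-feasibility argument verbatim to obtain $V(y^*)-V(0)\ge\langle y^*,x\rangle$ for every $y^*$ with $\U_{y^*}\ne\emptyset$, i.e.\ $x\in\partial V(0)$. I do not expect any genuine obstacle here; the only points requiring care are the bookkeeping of the dual pairing (the test element $x$ now sits on the right of $\langle\cdot,\cdot\rangle$) and the verification that the scalar integrand above is Carath\'eodory, which is immediate from the continuity of the pairing composed with the norm-continuous map $a\mapsto\Phi(t,a)$ supplied by $\mathrm{(H_2^*)}$.
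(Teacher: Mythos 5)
Your proposal is correct and follows essentially the same route as the paper's proof: the same perturbations of the subgradient inequality (first by $\mathit{w}^*\text{-}\int_T\Phi(t,u(t))d\mu-z^*$ with $z^*\in C$, then by the difference of Gelfand integrals), the same measurable-selection-and-splicing contradiction argument to upgrade the integrated inequality to \eqref{mp*}, and the same integration of the pointwise Hamiltonian inequality plus the normal-cone pairing for the converse. The only cosmetic difference is that you state explicitly the Gelfand-integral commutation identity $\langle \mathit{w}^*\text{-}\int_T\Phi\,d\mu,x\rangle=\int_T\langle\Phi,x\rangle d\mu$, which the paper uses tacitly.
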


\begin{proof}
(i) $\Rightarrow$ (ii): The subgradient inequality for $v$ yields 
\begin{equation}
\label{eq5}
\langle x^*,x \rangle\le V(x^*)-V(0) \quad\text{for every $x^*\in E^*$}.
\end{equation} 
Take any $y^*\in C$ and set $x^*=\mathit{w}^*\text{-}\int\Phi(t,u(t))d\mu-y^*$. Since $u$ is a solution to \eqref{P^*_0} and $\mathit{w}^*\text{-}\int\Phi(t,u(t))d\mu\in C+x^*$, we have $V(x^*)\le \int\varphi(t,u(t))d\mu=V(0)$. The inclusion $-x\in N_C(\mathit{w}^*\text{-}\int\Phi(t,u(t))d\mu)$ follows from inequality \eqref{eq5} for this choice of $x^*$. To prove \eqref{mp*}, choose any $v\in \M(T,X)$ with $v(t)\in U(t)$ a.e.\ $t\in T$ and set $x^*=\mathit{w}^*\text{-}\int\Phi(t,v(t))d\mu-\mathit{w}^*\text{-}\int\Phi(t,u(t))d\mu$. Since $\mathit{w}^*\text{-}\int\Phi(t,v(t))d\mu\in C+x^*$, we have $V(x^*)\le \int\varphi(t,v(t))d\mu$. It thus follows from \eqref{eq5} that 
\begin{equation}
\label{eq6}
\begin{aligned}
& \left\langle \mathit{w}^*\text{-}\int_T\Phi(t,v(t))d\mu-\mathit{w}^*\text{-}\int_T\Phi(t,u(t))d\mu,x \right\rangle  \\ {}\le{} 
& \int_T\varphi(t,v(t))d\mu-\int_T\varphi(t,u(t))d\mu.
\end{aligned}
\end{equation}
To derive \eqref{mp*} from \eqref{eq6}, assume that there exists $A\in \Sigma$ of positive measure such that for every $t\in A$ there exists $a\in U(t)$ satisfying $H(t,x)\ge \left\langle \Phi(t,a),x \right\rangle-\varphi(t,a)>\left\langle \Phi(t,u(t)),x \right\rangle-\varphi(t,u(t))$. Define the multifunction $\Psi:A\twoheadrightarrow \R_{++}\times X$ by 
\begin{align*}
& \Psi(t):=\left\{ (r,a)\in \R_{++}\times U(t)\,\left| \begin{array}{l}\langle \Phi(t,u(t)),x \rangle-\varphi(t,u(t))+r \\ \quad =\langle \Phi(t,a),x \rangle-\varphi(t,a) \end{array} \right.\right\}.
\end{align*}
Since $\mathrm{gph}\,\Psi\in \Sigma_A\otimes \mathrm{Borel}\,(\R_{++})\otimes \mathrm{Borel}\,(X)$ (see \cite[Corollary 18.18]{ab06}), $\Psi$ admits a measurable selector. Let $u_A:A\to X$ be the $X$-\hspace{0pt}component mapping of a measurable selector from $\Psi$. Define $v\in \M(T,X)$ by $v(t)=u_A(t)$ on $A$ and $v(t)=u(t)$ on $T\setminus A$. By construction, $v(t)\in U(t)$ a.e.\ $t\in T$, and $\langle \Phi(t,u(t)),x \rangle-\varphi(t,u(t))<\langle \Phi(t,v(t)),x \rangle-\varphi(t,v(t))$ on $A$ and $\langle \Phi(t,u(t)),x \rangle-\varphi(t,u(t))=\langle \Phi(t,a),x \rangle-\varphi(t,a)$ on $T\setminus A$. Integrating the inequality over $A$ and the equality over $T\setminus A$ and adding both sides of the integrals, we derive a contradiction to \eqref{eq6}. 

(ii) $\Rightarrow$ (i): Conversely, suppose that $-x\in N_{C}(\mathit{w}^*\text{-}\int\Phi(t,u(t))d\mu)$ and $u$ satisfies \eqref{mp*}. Choose any $v\in \U$. Integrating both sides of the inequality $\varphi(t,v(t))-\varphi(t,u(t))\ge \langle \Phi(t,v(t))-\Phi(t,u(t)),x \rangle$ that stems from \eqref{mp*} yields inequality \eqref{eq6} with $\langle \mathit{w}^*\text{-}\int\Phi(t,v(t))d\mu-\mathit{w}^*\text{-}\int\Phi(t,u(t))d\mu,x \rangle\ge 0$, which follows from the fact that $\mathit{w}^*\text{-}\int\Phi(t,v(t))d\mu\in C$ and the definition of the normal cone $N_{C}(\mathit{w}^*\text{-}\int\Phi(t,u(t))d\mu)$. Hence, $u$ is a solution to \eqref{P^*_0}. To prove that $x\in \partial V(0)$, take any $x^*\in E^*$ such that $\U_{x^*}$ is nonempty. Since $\mathit{w}^*\text{-}\int\Phi(t,v(t))d\mu-x^*\in C$ for every $v\in \U_{x^*}$, we have 
$$
\left\langle \mathit{w}^*\text{-}\int_T\Phi(t,v(t))d\mu-x^*-\mathit{w}^*\text{-}\int_T\Phi(t,u(t))d\mu,x \right\rangle \ge 0.
$$
It follows from \eqref{eq5} that $\int\varphi(t,v(t))d\mu-\int\varphi(t,u(t))d\mu\ge \langle x^*,x \rangle$ for every $v\in \U_{x^*}$. Therefore, we obtain $V(x^*)-V(0)\ge \langle x^*,x \rangle$ for every $x^*\in E^*$ with $\U_{x^*}\ne \emptyset$. 
\end{proof}

The multifunction $\Gamma_{\varphi,\Phi}^U:T\twoheadrightarrow \R\times E^*$ is defined similarly in the previous subsection and the same hypothesis with $(\mathrm{H}_4)$ is imposed in the dual space setting, where $\mathit{w}^*\text{-}\mathrm{int}$ means the interior with respect to the weak$^*\!$ topology of $E^*$. 

\begin{description}
\item[$\mathbf{(H_4^*)}$] Either $\left( \mathit{w}^*\text{-}\mathrm{int}\left( \mathit{w}^*\text{-}\displaystyle\int_T\Gamma_{\varphi,\Phi}^U(t)d\mu \right)\right)\cap (\R\times C)$ or $\displaystyle\mathit{w}^*\text{-}\int_T\Phi(t,U(t))d\mu\cap \mathit{w}^*\text{-}\mathrm{int}\,C$ is nonempty. 
\end{description}

\begin{thm}
\label{thm6}
Let $(T,\Sigma,\mu)$ be a saturated finite measure space and $C$ be a nonempty convex subset of $E^*$. Under hypotheses $(\mathrm{H}_1)$, $(\mathrm{H}_2^*)$, $(\mathrm{H}_3)$, and $(\mathrm{H}_4^*)$, $u\in \U$ is a solution to \eqref{P^*_0} if and only if there exists 
$$
-x\in N_{C}\left( \mathit{w}^*\text{-}\int_T\Phi(t,u(t))d\mu \right)
$$ 
such that \eqref{mp*} holds. 
\end{thm}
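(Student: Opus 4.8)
The plan is to run the dual analogue of the proof of Theorem \ref{thm2}, with Theorem \ref{thm5} supplying both the converse direction and the passage from the subgradient inequality to the maximum principle. The ``if'' direction is precisely the implication (ii) $\Rightarrow$ (i) of Theorem \ref{thm5}, so all the work is in the ``only if'' direction; moreover, the derivation of \eqref{mp*} from \eqref{eq6} already carried out inside Theorem \ref{thm5} shows that it suffices to produce $-x\in N_C(\mathit{w}^*\text{-}\int_T\Phi(t,u(t))d\mu)$ for which \eqref{eq6} holds for every $v\in \M(T,X)$ with $v(t)\in U(t)$ a.e.\ $t\in T$. The separation underlying this will be carried out in $\R\times E^*$, regarded as the dual of the separable Banach space $\R\times E$ and equipped with its weak$^*\!$ topology, whose continuous dual is exactly $\R\times E$. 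The structural input is Proposition \ref{lyp}(vi): since $(T,\Sigma,\mu)$ is saturated and $\R\times E^*=(\R\times E)^*$, the Gelfand integral $\mathit{w}^*\text{-}\int_T\Gamma_{\varphi,\Phi}^U(t)d\mu$ is convex.

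For the only-if direction, suppose $u\in \U$ solves \eqref{P^*_0} and set
$$
D:=\left\{ r\in \R\mid r<\int_T\varphi(t,u(t))d\mu \right\}\times C\subset \R\times E^*,
$$
a convex set whose weak$^*\!$ interior is nonempty precisely when $\mathit{w}^*\text{-}\mathrm{int}\,C\ne \emptyset$. Hypothesis $(\mathrm{H}_4^*)$ then guarantees that at least one of $D$ and $\mathit{w}^*\text{-}\int_T\Gamma_{\varphi,\Phi}^U(t)d\mu$ has nonempty weak$^*\!$ interior, while the optimality of $u$ forces $D\cap \mathit{w}^*\text{-}\int_T\Gamma_{\varphi,\Phi}^U(t)d\mu=\emptyset$, since any common point would, via the Filippov implicit function theorem, yield a feasible control whose value strictly undercuts $\int_T\varphi(t,u(t))d\mu$. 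Applying the separation theorem in the locally convex space $(\R\times E^*,\mathit{w}^*)$ then produces a nonzero weak$^*\!$ continuous functional, necessarily of the form $(\alpha,y)\in (\R\times E)\setminus \{ 0 \}$, separating the weak$^*\!$ closure of $D$ from $\mathit{w}^*\text{-}\int_T\Gamma_{\varphi,\Phi}^U(t)d\mu$.

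From here the analysis of $(\alpha,y)$ runs verbatim as in Theorem \ref{thm2}. The point $\mathit{w}^*\text{-}\int_T(\varphi(t,u(t)),\Phi(t,u(t)))d\mu$ lies in both the weak$^*\!$ closure of $D$ and the Gelfand integral, and substituting it into the separation inequality gives $\langle x^*,y \rangle\le \langle \mathit{w}^*\text{-}\int_T\Phi(t,u(t))d\mu,y \rangle$ for all $x^*\in C$, that is, $y\in N_C(\mathit{w}^*\text{-}\int_T\Phi(t,u(t))d\mu)$, together with $\alpha\ge 0$. The degenerate case $\alpha=0$ is excluded by the two alternatives in $(\mathrm{H}_4^*)$: a weak$^*\!$ interior point of the Gelfand integral meeting $\R\times C$, or of $C$ itself, may be perturbed by $\pm\varepsilon h$ for arbitrary $h\in E$ while remaining in the relevant set, which forces $y=0$ and contradicts $(\alpha,y)\ne 0$. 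Hence $\alpha>0$, and the normalization $x=-y/\alpha$ yields \eqref{eq6} with $-x\in N_C(\mathit{w}^*\text{-}\int_T\Phi(t,u(t))d\mu)$; by the reduction above this completes the only-if direction, the converse being immediate from Theorem \ref{thm5}.

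The step I expect to be the crux is the legitimacy of the separation in the weak$^*\!$ topology: it must be performed in $(\R\times E^*,\mathit{w}^*)$ rather than in the norm topology, so that the separating functional is weak$^*\!$ continuous and hence represented by a genuine element $y\in E$ rather than an element of $E^{**}$, which is exactly why the adjoint variable in \eqref{mp*} is taken in the predual $E$ and why the interior conditions in $(\mathrm{H}_4^*)$ must be weak$^*\!$ interiors. A subsidiary point to verify is that, since $\R$ is self-dual, the Gelfand integral over $\R\times E^*=(\R\times E)^*$ splits as the ordinary integral in the scalar coordinate and the Gelfand integral in the $E^*$ coordinate, so that Proposition \ref{lyp}(vi) applies to $\Gamma_{\varphi,\Phi}^U$ unchanged and the point $\mathit{w}^*\text{-}\int_T(\varphi(t,u(t)),\Phi(t,u(t)))d\mu$ is well defined and belongs to the Gelfand integral of the multifunction.
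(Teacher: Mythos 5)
Your proposal is correct and takes essentially the same route as the paper's own proof: the same reduction via Theorem \ref{thm5} to producing $-x\in N_C(\mathit{w}^*\text{-}\int_T\Phi(t,u(t))d\mu)$ satisfying the key integral inequality, the same set $D$, separation in $(\R\times E^*,\mathit{w}^*)$ with a functional $(\alpha,y)\in(\R\times E)\setminus\{0\}$ justified by Proposition \ref{lyp} and $(\mathrm{H}_4^*)$, and the same perturbation argument excluding $\alpha=0$ followed by the normalization $x=-y/\alpha$ (your explicit appeal to the Filippov implicit function theorem for the disjointness $D\cap \mathit{w}^*\text{-}\int_T\Gamma_{\varphi,\Phi}^U\,d\mu=\emptyset$ is a welcome elaboration of a step the paper leaves implicit). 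The only blemish is a typographical slip: in the $\alpha=0$ case the perturbation must be by $h^*\in E^*$ (the points being perturbed live in $E^*$), not by $h\in E$, with $\langle h^*,y\rangle=0$ for all $h^*\in E^*$ then forcing $y=0$.
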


\begin{proof}
Suppose that $u\in \U$ is a solution to \eqref{P^*_0}. In view of the proof of Theorem \ref{thm5}, it suffices to show that there exists $-x\in N_{C}(\mathit{w}^*\text{-}\int\Phi(t,u(t))d\mu)$ such that inequality \eqref{eq6} holds for every $v\in \M(T,X)$ with $v(t)\in U(t)$ a.e.\ $t\in T$. Toward this end, define the convex subset $D$ of $\R\times E^*$ by $D:=\{ r\in \R\mid r<\int\varphi(t,u(t))d\mu \}\times C$. Note that either $\mathit{w}^*\text{-}\mathrm{int}\,D$ or $\mathit{w}^*\text{-}\mathrm{int}(\mathit{w}^*\text{-}\int\Gamma_{\varphi,\Phi}^U d\mu)$ is nonempty in view of $(\mathrm{H}_4^*)$ and $\mathit{w}^*\text{-}\int\Gamma_{\varphi,\Phi}^U d\mu$ is convex because of Proposition \ref{lyp}. Since $u$ is a solution to \eqref{P^*_0}, we have $D\cap \mathit{w}^*\text{-}\int\Gamma_{\varphi,\Phi}^U d\mu=\emptyset$. By the separation theorem, there exists $(\alpha,y)\in (\R\times E)\setminus \{ 0 \}$ that separates the weak$^*\!$ closure $\overline{D}^{\,\mathit{w}^*}$ of $D$ and $\mathit{w}^*\text{-}\int\Gamma_{\varphi,\Phi}^U d\mu$, that is, 
\begin{equation}
\label{eq7}
\alpha r+\langle x^*,y \rangle \le \alpha s+\langle y^*,y \rangle
\end{equation}
for every $(r,x^*)\in \overline{D}^{\,\mathit{w}^*}$ and $(s,y^*)\in \mathit{w}^*\text{-}\int\Gamma_{\varphi,\Phi}^U d\mu$. Since 
$$
\left( \mathit{w}^*\text{-}\int_T(\varphi(t,u(t)),\Phi(t,u(t)))d\mu \right)\in \overline{D}^{\,\mathit{w}^*}\cap \mathit{w}^*\text{-}\int_T\Gamma_{\varphi,\Phi}^U(t) d\mu,
$$
we have $\langle x^*,y \rangle\le \langle \int\Phi(t,u(t))d\mu,y \rangle$ for every $x^*\in C$. This means that $y\in N_C(\mathit{w}^*\text{-}\int\Phi(t,u(t))d\mu)$. It follows from \eqref{eq7} that $\alpha\ge 0$. Assume that $\alpha=0$. We then have $\langle x^*,y \rangle\le \langle y^*,y \rangle$ for every $x^*\in C$ and $y^*\in \mathit{w}^*\text{-}\int\Phi(t,U(t))d\mu$. If $(\mathit{w}^*\text{-}\mathrm{int}(\mathit{w}^*\text{-}\int\Gamma_{\varphi,\Phi}^Ud\mu))\cap (\R\times C)$ is nonempty, then there exists $y^*_0\in (\mathrm{int}(\mathit{w}^*\text{-}\int\Phi(t,U(t))d\mu))\cap C$ such that for every $h^*\in E^*$ we have $y^*_0\pm \varepsilon h^*\in \mathrm{int}(\mathit{w}^*\text{-}\int\Phi(t,U(t))d\mu$ for all sufficiently small $\varepsilon>0$, and hence, $\langle x^*,y \rangle\le \langle y^*_0\pm\varepsilon h^*,y \rangle$ for every $x^*\in C$. Letting $x^*=y^*_0$ inevitably yields $y=0$, a contradiction. If $\mathit{w}^*\text{-}\int\Phi(t,U(t))d\mu\cap \mathit{w}^*\text{-}\mathrm{int}\,C$ is nonempty, then there exists $x^*_0\in \mathit{w}^*\text{-}\int\Phi(t,U(t))d\mu\cap \mathrm{int}\,C$ such that for every $h^*\in E^*$ we have $x^*_0\pm \varepsilon h^*\in \mathrm{int}\,C$ for all sufficiently small $\varepsilon>0$, and hence, $\langle x^*_0\pm\varepsilon h^*,y \rangle\le \langle y^*,y \rangle$ for every $y^*\in \mathit{w}^*\text{-}\int\Phi(t,U(t))d\mu$. Letting $y^*=x^*_0$ inevitably yields $y=0$, a contradiction again. Henceforth, $\alpha>0$. The normalization $x=-y/\alpha$ in \eqref{eq7} implies inequality \eqref{eq6} with $-x\in  N_{C}(\mathit{w}^*\text{-}\int\Phi(t,u(t))d\mu)$. 

The converse implication is evident from Theorem \ref{thm5}. 
\end{proof}

\begin{description}
\item[$(\mathbf{H_4'^*})$] $\displaystyle \mathit{w}^*\text{-}\int_T\Phi(t,U(t))d\mu\cap C$ is nonempty.  

\item[$(\mathbf{H_5^*})$] $\Gamma_{\varphi,\Phi}^U:T\twoheadrightarrow \R\times E^*$ is integrably bounded with weakly$^*\!$ compact values.  
\end{description}

\begin{thm}
\label{thm7}
Let $(T,\Sigma,\mu)$ be a nonatomic finite measure space and $C$ be a nonempty weakly$\!^*$ closed subset of $E^*$. If $(T,\Sigma,\mu)$ is saturated, then for every quartet $(\varphi,\Phi,U,C)$ satisfying hypotheses $(\mathrm{H}_1)$, $(\mathrm{H}_2^*)$, $(\mathrm{H}_3)$, $(\mathrm{H}_4'^*)$, and $(\mathrm{H}_5^*)$, a solution to \eqref{P^*_0} exists. Conversely, if a solution to \eqref{P^*_0} exists for every quartet $(\varphi,\Phi,U,C)$ satisfying $(\mathrm{H}_1)$, $(\mathrm{H}_2^*)$, $(\mathrm{H}_3)$, $(\mathrm{H}_4'^*)$, and $(\mathrm{H}_5^*)$, then $(T,\Sigma,\mu)$ is saturated whenever $E$ is infinite dimensional and $X$ is an uncountable compact Polish space.  
\end{thm}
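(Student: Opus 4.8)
The plan is to prove the existence half by the direct (relaxation) method adapted to the weak$^*$ topology, mirroring step by step the argument of Theorem \ref{thm3} but with Gelfand integrals in place of Bochner integrals and the dual-space parts of Proposition \ref{lyp} in place of the primal ones; the converse implication I would simply quote from the literature. For the forward direction I would start from a minimizing sequence $\{u_n\}_{n\in\N}\subset\M(T,X)$ with $u_n(t)\in U(t)$ a.e., $\mathit{w}^*\text{-}\int_T\Phi(t,u_n(t))d\mu\in C$, and $\int_T\varphi(t,u_n(t))d\mu\to\inf_{u\in\U}\int_T\varphi(t,u(t))d\mu$. By hypothesis $(\mathrm{H}_5^*)$ the multifunction $\Gamma_{\varphi,\Phi}^U$ is integrably bounded with weakly$^*$ compact values and measurable graph, so on the saturated space the Gelfand integral $\mathit{w}^*\text{-}\int_T\Gamma_{\varphi,\Phi}^U(t)d\mu$ is weakly$^*$ compact and convex in $\R\times E^*$ by Proposition \ref{lyp} (convexity from Proposition \ref{lyp} and weak$^*$ compactness from the integrable boundedness as in \cite{po08,sy08}); the joint integrals $\mathit{w}^*\text{-}\int_T(\varphi(t,u_n(t)),\Phi(t,u_n(t)))d\mu$ all lie in this set.

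Since $E$ is separable, the weak$^*$ topology is metrizable on norm-bounded subsets of $E^*$, and the integral set above is norm-bounded because each selector $f$ satisfies $\|\mathit{w}^*\text{-}\int_T fd\mu\|\le\int_T\|f(t)\|d\mu$; hence this weakly$^*$ compact set is weakly$^*$ sequentially compact. Extracting a subsequence (not relabeled), I obtain $(r,x^*):=\mathit{w}^*\text{-}\lim_n\mathit{w}^*\text{-}\int_T(\varphi(t,u_n(t)),\Phi(t,u_n(t)))d\mu\in\mathit{w}^*\text{-}\int_T\Gamma_{\varphi,\Phi}^U(t)d\mu$. By the Filippov implicit function theorem (see \cite[Theorem 18.17]{ab06}) there then exists $\hat u\in\M(T,X)$ with $\hat u(t)\in U(t)$ a.e. such that $r=\int_T\varphi(t,\hat u(t))d\mu$ and $x^*=\mathit{w}^*\text{-}\int_T\Phi(t,\hat u(t))d\mu$. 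Feasibility of $\hat u$ follows because $C$ is weakly$^*$ closed and $x^*$ is the weak$^*$ limit of the points $\mathit{w}^*\text{-}\int_T\Phi(t,u_n(t))d\mu\in C$, while optimality follows because $r=\lim_n\int_T\varphi(t,u_n(t))d\mu=\inf_{u\in\U}\int_T\varphi(t,u(t))d\mu$; hence $\hat u$ solves \eqref{P^*_0}.

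The main obstacle, and the only place where the argument genuinely departs from the Bochner case, is the extraction of a convergent subsequence: in Theorem \ref{thm3} weak compactness upgrades to sequential weak compactness automatically through the Eberlein--\v{S}mulian theorem, whereas the weak$^*$ topology on $E^*$ need not be sequentially compact in general. What rescues the argument is the standing separability of $E$, which makes the weak$^*$ topology metrizable on the norm-bounded set $\mathit{w}^*\text{-}\int_T\Gamma_{\varphi,\Phi}^U(t)d\mu$; one must also carry along the real first coordinate, for which ordinary compactness in $\R$ suffices, and confirm that joint weak$^*$ convergence in $\R\times E^*$ is exactly the input consumed by the Filippov theorem.

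Finally, for the converse implication — that the existence of a solution to \eqref{P^*_0} for every quartet satisfying $(\mathrm{H}_1)$, $(\mathrm{H}_2^*)$, $(\mathrm{H}_3)$, $(\mathrm{H}_4'^*)$, and $(\mathrm{H}_5^*)$ forces $(T,\Sigma,\mu)$ to be saturated when $E$ is infinite dimensional and $X$ is an uncountable compact Polish space — I would invoke \cite[Theorem 4.4]{sa17}, which is stated precisely in this Gelfand-integral dual-space framework and therefore applies verbatim, unlike in Theorem \ref{thm3} where only the argument of that reference (not its statement) transferred to the Bochner setting.
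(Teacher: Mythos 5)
Your proposal is correct and follows essentially the same route as the paper: a minimizing sequence, weak$^*$ compactness and convexity of $\mathit{w}^*\text{-}\int_T\Gamma_{\varphi,\Phi}^U(t)\,d\mu$ from Proposition \ref{lyp}, sequential weak$^*$ compactness via metrizability (the paper quotes metrizability of weak$^*$ compact sets in the dual of a separable space, you derive it from norm-boundedness of the integral set --- the same fact), the Filippov implicit function theorem to realize the limit point by an admissible control, and \cite[Theorem 4.4]{sa17} quoted verbatim for the converse. The only cosmetic difference is the citation for Filippov's theorem (the paper uses \cite[Theorem III.38]{cv77} in this Gelfand setting rather than \cite[Theorem 18.17]{ab06}), which does not affect the argument.
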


\begin{proof}
Let $\{ u_n \}_{n\in \N}\subset \M(T,X)$ be a minimizing sequence of \eqref{P^*_0} satisfying $\int \varphi(t,u_n(t))d\mu\to \inf_{u\in \U}\int\varphi(t,u(t))d\mu$ with $\mathit{w}^*\text{-}\int\Phi(t,u_n(t))d\mu\in C$ and $u_n(t)\in U(t)$ a.e.\ $t\in T$ for each $n\in \N$. Since it follows from Proposition \ref{lyp} that the Gelfand integral $\mathit{w}^*\text{-}\int\Gamma_{\varphi,\Phi}^Ud\mu$ is a weakly$^*\!$ compact set containing the sequence $\{ \mathit{w}^*\text{-}\int(\varphi(t,u_n(t)),\Phi(t,u_n(t)))d\mu \}_{n\in \N}$ and weakly$^*\!$ compact subsets of the dual space of a separable Banach space are metrizable with respect to the weak$^*\!$ topology (see \cite[Theorem 6.30]{ab06}), the weakly$^*\!$ compact set $\mathit{w}^*\text{-}\int\Gamma_{\varphi,\Phi}^Ud\mu$ is also sequentially weakly$^*\!$ compact. Hence, there exists a subsequence of it (which we do not relabel) such that 
$$
(r,x^*):=\mathit{w}^*\text{-}\lim_{n\to \infty}\mathit{w}^*\text{-}\int_T(\varphi(t,u_n(t)),\Phi(t,u_n(t)))d\mu\in \mathit{w}^*\text{-}\int_T\Gamma_{\varphi,\Phi}^U(t)d\mu.
$$
By virtue of the Filippov implicit function theorem (see \cite[Theorem III.38]{cv77}), there exists $\hat{u}\in \M(T,X)$ with $\hat{u}(t)\in U(t)$ a.e.\ $t\in T$ such that $(r,x^*)=\mathit{w}^*\text{-}\int(\varphi(t,\hat{u}(t)),\Phi(t,\hat{u}(t)))d\mu$. We thus obtain 
$$
\int_T\varphi(t,\hat{u}(t))d\mu=\lim_{n\to\infty}\int_T\varphi(t,u_n(t))d\mu=\inf_{u\in \U}\int_T\varphi(t,u(t))d\mu
$$ 
and 
$$
\mathit{w}^*\text{-}\int_T\Phi(t,\hat{u}(t))d\mu=\mathit{w}^*\text{-}\lim_{n\to \infty}\int_T\Phi(t,u_n(t))d\mu\in C.
$$
Therefore, $\hat{u}$ is a solution to \eqref{P^*_0}. 

The converse implication follows from \cite[Theorem 4.4]{sa17}. 
\end{proof}

\begin{thm}
Let $(T,\Sigma,\mu)$ be a saturated finite measure space and $C$ be a nonempty, weakly$^*\!$ closed, convex subset of $E^*$. Then under hypotheses $(\mathrm{H}_1)$, $(\mathrm{H}_2^*)$, $(\mathrm{H}_3)$, $(\mathrm{H}_4'^*)$, and $(\mathrm{H}_5^*)$, $V$ is a proper convex function that is continuous on $\mathrm{int}(\mathrm{dom}\,V)$. Moreover, $V$ is weakly$^*\!$ lower semicontinuous on $E^*$. 
\end{thm}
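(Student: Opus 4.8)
The plan is to establish this statement as the Gelfand (dual space) counterpart of Theorem \ref{thm4}, transcribing that proof with the Bochner integral replaced by the Gelfand integral $\mathit{w}^*\text{-}\int$, the weak topology replaced by the weak$^*\!$ topology of $E^*$, weak compactness replaced by weak$^*\!$ compactness, and the closedness of $C$ replaced by its weak$^*\!$ closedness. The starting point is Proposition \ref{lyp}, which under saturation together with $(\mathrm{H}_5^*)$ guarantees that $\mathit{w}^*\text{-}\int_T\Gamma_{\varphi,\Phi}^U(t)d\mu$ is a weakly$^*\!$ compact, convex subset of $\R\times E^*$. Convexity of $V$ would then follow from the representation
$$
V(x^*)=\min\left\{ r\in \R\mid (r,y^*)\in \mathit{w}^*\text{-}\int_T\Gamma_{\varphi,\Phi}^U(t)d\mu\cap \left( \R\times (C+x^*) \right) \right\},
$$
where the minimum is attained because the intersection of the weakly$^*\!$ compact integral with the weakly$^*\!$ closed set $\R\times(C+x^*)$ is weakly$^*\!$ compact with compact projection onto $\R$; the convexity of both $\mathit{w}^*\text{-}\int\Gamma_{\varphi,\Phi}^U d\mu$ and $C$ then yields $V(\lambda x^*_0+(1-\lambda)x^*_1)\le \lambda V(x^*_0)+(1-\lambda)V(x^*_1)$ by combining the respective minimizers.

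For properness and continuity I would argue exactly as in Theorem \ref{thm4}. Integrable boundedness of $\Gamma_{\varphi,\Phi}^U$ forces the $\R$-components of $\mathit{w}^*\text{-}\int\Gamma_{\varphi,\Phi}^U d\mu$ to be bounded both below and above, so $V>-\infty$ everywhere, while $(\mathrm{H}_4'^*)$ supplies a feasible control for \eqref{P^*_0}, giving $0\in \mathrm{dom}\,V$ and hence properness. The upper bound shows $V$ is bounded from above on $\mathrm{dom}\,V$, so the convex function $V$ is continuous on $\mathrm{int}(\mathrm{dom}\,V)$ by \cite[Theorem 3.2.1]{it74}.

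The substantive part is the weak$^*\!$ lower semicontinuity, for which I would show that $\mathrm{epi}\,V\subset \R\times E^*$ is weakly$^*\!$ closed. Taking a net $\{(r_\alpha,x^*_\alpha)\}_{\alpha\in\Lambda}$ in $\mathrm{epi}\,V$ converging weakly$^*\!$ to $(r,x^*)$ and assuming for contradiction that $r<V(x^*)$, I would invoke Theorem \ref{thm7} (with $C$ replaced by $C+x^*_\alpha$) to produce controls $u_\alpha$ with $V(x^*_\alpha)=\int\varphi(t,u_\alpha(t))d\mu$, $\mathit{w}^*\text{-}\int\Phi(t,u_\alpha(t))d\mu\in C+x^*_\alpha$, and $u_\alpha(t)\in U(t)$ a.e. The crucial observation, exactly the mechanism of Theorem \ref{thm4}, is that the data integrals $\mathit{w}^*\text{-}\int(\varphi(t,u_\alpha(t)),\Phi(t,u_\alpha(t)))d\mu$ all lie in the single, $\alpha$-independent, weakly$^*\!$ compact set $\mathit{w}^*\text{-}\int\Gamma_{\varphi,\Phi}^U d\mu$, so a subnet converges weakly$^*\!$ to some $(s,y^*)$ in that set. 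The Filippov implicit function theorem (see \cite[Theorem III.38]{cv77}) then yields a feasible $u$ with $(s,y^*)=\mathit{w}^*\text{-}\int(\varphi(t,u(t)),\Phi(t,u(t)))d\mu$. Since $\mathit{w}^*\text{-}\int\Phi(t,u_\alpha(t))d\mu-x^*_\alpha\in C$ and this net converges weakly$^*\!$ to $y^*-x^*$, the weak$^*\!$ closedness of $C$ gives $y^*\in C+x^*$, so $u$ is feasible for \eqref{P^*_x^*} and $V(x^*)\le s$. Reading off the $\R$-component along the subnet yields $s=\lim_\alpha \int\varphi(t,u_\alpha(t))d\mu=\lim_\alpha V(x^*_\alpha)\le \lim_\alpha r_\alpha=r$, whence $V(x^*)\le r$, the desired contradiction.

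The step I expect to require the most care is precisely this last one: unlike the reflexive situation, a weakly$^*\!$ convergent net in $E^*$ need not be bounded, so one cannot a priori confine $\{x^*_\alpha\}$ to a weakly$^*\!$ compact set. What rescues the argument is that the minimization is transferred, via Theorem \ref{thm7} and the Lyapunov-type convexity of Proposition \ref{lyp}, onto the fixed weakly$^*\!$ compact image $\mathit{w}^*\text{-}\int\Gamma_{\varphi,\Phi}^U d\mu$, whose compactness is independent of the net. Should one prefer to avoid nets altogether, an equivalent route is to note that $\mathrm{epi}\,V$ is convex and to apply the Krein--\v{S}mulian theorem, reducing weak$^*\!$ closedness to closedness on bounded sets, where the weak$^*\!$ topology is metrizable (as $E$ is separable) and sequences suffice.
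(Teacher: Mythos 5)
Your proposal is correct and follows essentially the same route as the paper's own proof: the same minimum representation over the weakly$^*\!$ compact convex set $\mathit{w}^*\text{-}\int\Gamma_{\varphi,\Phi}^U d\mu$ for convexity, the same properness/continuity argument, and the same net-plus-Filippov contradiction argument (via Theorem \ref{thm7} with $C$ replaced by $C+x^*_\alpha$) for weak$^*\!$ lower semicontinuity. Your added observations --- that the net $\{x^*_\alpha\}$ need not be bounded but the argument is saved by the $\alpha$-independent compact image, and the alternative Krein--\v{S}mulian reduction to bounded sets --- are sound refinements the paper leaves implicit.
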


\begin{proof}
Since the Gelfand integral $\mathit{w}^*\text{-}\int\Gamma_{\varphi,\Phi}^U d\mu$ is weakly$^*\!$ compact and convex by Proposition \ref{lyp}, the convexity of $V$ follows from the equality
$$
V(x^*)=\min\left\{ r\in \R\mid (r,y^*)\in \mathit{w}^*\text{-}\int_T\Gamma_{\varphi,\Phi}^U(t)d\mu\cap(\R\times (C+x^*)) \right\}. 
$$
The properness of $V$ follows from the fact that $\Gamma_{\varphi,\Phi}^U$ is integrably bounded and $0\in \mathrm{dom}\,V$. Since $V$ is bounded from above on $\mathrm{dom}\,V$, it is continuous on $\mathrm{int}(\mathrm{dom}\,V)$.

To demonstrate the weak$^*\!$ lower semicontinuity of $V$, it suffices to show that $\mathrm{epi}\,V=\{ (r,x^*)\in \R\times E^*\mid V(x^*)\le r \}$ is weakly$^*\!$ closed. Toward this end, let $\{ (r_\alpha,x_\alpha^*) \}_{\alpha\in \Lambda}$ be a net in $\mathrm{epi}\,V$ converging weakly$^*\!$ to $(r,x^*)\in \R\times E^*$. Assume, by way of contradiction, that $(r,x^*)$ does not belong to $\mathrm{epi}\,V$. We then have $\lim_\alpha r_\alpha=r<V(x^*)$. Since $V(x^*_\alpha)\le r_\alpha$, extracting a subnet from $\{ x^*_\alpha \}_{\alpha\in \Lambda}$ (which we do not relabel) yields $\U_{x^*_\alpha}\ne \emptyset$ for each $\alpha\in \Lambda$. Replacing $C$ by $C+x^*_\alpha$ in Theorem \ref{thm7} implies that there exists a net $\{ u_\alpha \}_{\alpha\in \Lambda}$ in $\M(T,X)$ such that $V(x^*_\alpha)=\int\varphi(t,u_\alpha(t))d\mu$, $\mathit{w}^*\text{-}\int\Phi(t,u_\alpha(t))d\mu\in C+x^*_\alpha$, and $u_\alpha(t)\in U(t)$ a.e.\ $t\in T$ for each $\alpha\in \Lambda$. Since the weakly$^*\!$ compact set $\mathit{w}^*\text{-}\int\Gamma_{\varphi,\Phi}^Ud\mu$ contains the net $\{ \mathit{w}^*\text{-}\int(\varphi(t,u_\alpha(t)),\Phi(t,u_\alpha(t)))d\mu \}_{\alpha\in \Lambda}$, there exists a subnet of it (which we do not relabel) such that 
$$
(s,y^*):=\mathit{w}^*\text{-}\lim_\alpha\mathit{w}^*\text{-}\int_T(\varphi(t,u_\alpha(t)),\Phi(t,u_\alpha(t)))d\mu\in \mathit{w}^*\text{-}\int_T\Gamma_{\varphi,\Phi}^U(t)d\mu.
$$ 
By the Filippov implicit function theorem, there is $u\in \M(T,X)$ with $u(t)\in U(t)$ a.e.\ $t\in T$ such that $(s,y^*)=\mathit{w}^*\text{-}\int(\varphi(t,u(t)),\Phi(t,u(t)))d\mu$. Since $\lim_{\alpha}\int\varphi(t,u_\alpha(t))d\mu=\int\varphi(t,u(t))d\mu$ and 
$$
\mathit{w}^*\text{-}\lim_{\alpha}\mathit{w}^*\text{-}\int_T\Phi(t,u_\alpha(t))d\mu=\mathit{w}^*\text{-}\int_T\Phi(t,u(t))d\mu\in C+x^*,
$$
we obtain $r<V(x^*)\le \int\varphi(t,u(t))d\mu=\lim_{\alpha}V(x^*_\alpha)\le \lim_{\alpha}r_\alpha=r$, a contradiction. This means that $\mathrm{epi}\,V$ is weakly$^*$ closed. 
\end{proof}

\end{document}